\documentclass[11pt,a4paper]{article}
\usepackage[T1]{fontenc}
\usepackage[utf8]{inputenc}
\usepackage[english]{babel}
\usepackage{amsmath,amssymb,amsthm,mathtools}
\usepackage{newpxtext,newpxmath}
\usepackage{microtype}
\usepackage[a4paper,left=27mm,right=27mm,top=25mm,bottom=26mm,headsep=7mm]{geometry}
\usepackage{booktabs,array}
\usepackage{enumitem}
\usepackage{fancyhdr}
\usepackage[hidelinks,bookmarksnumbered]{hyperref}
\usepackage{bookmark}
\usepackage{needspace}
\fancypagestyle{plain}{\fancyhf{}\fancyfoot[C]{\small\thepage}}
\allowdisplaybreaks[1]
\numberwithin{equation}{section}
\newtheorem{theorem}{Theorem}[section]

\newtheorem{lemma}[theorem]{Lemma}
\newtheorem{corollary}[theorem]{Corollary}
\theoremstyle{definition}
\newtheorem{example}[theorem]{Example}
\theoremstyle{remark}
\newtheorem{remark}[theorem]{Remark}
\newcommand{\E}{\mathbb E}
\newcommand{\Pp}{\mathbb P}
\newcommand{\R}{\mathbb R}
\newcommand{\Z}{\mathbb Z}
\newcommand{\Fq}{\mathbb F_q}
\newcommand{\one}{\mathbf 1}
\newcommand{\cH}{\mathcal H}
\newcommand{\cA}{\mathcal A}
\newcommand{\cL}{\mathcal L}
\newcommand{\eps}{\varepsilon}
\newcommand{\as}{\asymp}
\newcommand{\norm}[1]{\lVert #1\rVert}
\newcommand{\abs}[1]{\lvert #1\rvert}
\DeclareMathOperator{\diam}{diam}
\DeclareMathOperator{\KL}{KL}
\DeclareMathOperator{\RD}{RD}
\DeclareMathOperator{\Law}{Law}
\newcommand{\step}[1]{\par\smallskip\noindent\textit{#1.}\ }
\hypersetup{
 pdftitle={Stationary Bernoulli processes and overlap profiles},
 pdfsubject={An invariant comparison formula, dyadic entropy, and applications to moving windows and finite geometry},
 pdfkeywords={Bernoulli processes, stationary processes, Haar measure, overlap profiles, entropy, scan windows},
 pdfcreator={LaTeX with hyperref}}

\begin{document}
\title{Dudley-type bound for stationary Bernoulli processes}

\author{Witold Bednorz\footnote{University of Warsaw, Poland. wbednorz@mimuw.edu.pl}, Rafa{\l}  Martynek \footnote{Warsaw School of Economics, Poland. rmartyn@sgh.waw.pl}  and Rafa{\l} Meller \footnote{University of Warsaw, Poland. r.meller@mimuw.edu.pl}}

\date{}
\maketitle
\vspace{3pt}
\begin{abstract}
This is preliminary, AI-generated note on the Dudley-type bound for stationary Bernoulli processes postulated by M. Talagrand \cite{talagrand1} based on the unpublished notes of the authors and the recent note \cite{invariant} which in turn is an alternative proof of the result \cite{Hu}.
\end{abstract}
\noindent\textbf{Keywords.} Bernoulli processes; invariant comparison; Haar measure; overlap profiles; maximal inequalities.

\section{Introduction and the invariant formula}\label{sec:intro}

We study expected suprema of Bernoulli processes whose coefficient vectors form a finite orbit under coordinate permutations. Using the prescribed-index-law comparison \cite{invariant}, we express the expected supremum through a single invariant partition function. The probabilistic comparison is an external input; the orbit reduction is proved by symmetrizing a maximizing index and computing a Gibbs self-coupling. We then identify the equivalent dyadic ball-volume formula, including its level-zero term. For indicator vectors, the partition function depends only on the distribution of overlaps between sets. This gives a common treatment of separated set orbits, independent blocks, moving intervals, rectangular windows, and affine subspaces over finite fields. We keep signed and absolute suprema distinct, resolve the full-window endpoint, and explain how the same overlap profile produces different estimates for Bernoulli, Gaussian, and symmetric exponential coordinates.

Let $G$ be a finite group, let $f:G\to\R$, and let $(\eps_x)_{x\in G}$ be independent random signs, with
$\Pp(\eps_x=1)=\Pp(\eps_x=-1)=1/2$. Consider
\[
 X_g=\sum_{x\in G}f(gx)\eps_x,\qquad g\in G.
\]
The question is to estimate $\E\max_g X_g$ and $\E\max_g\abs{X_g}$ from deterministic information about $f$. The process is stationary under right translations of its index: for every $h\in G$, the vectors $(X_{gh})_{g\in G}$ and $(X_g)_{g\in G}$ have the same law. This follows by permuting the independent signs. The variables $X_g$ themselves are generally not independent.

When $f=\one_A$, the problem concerns random sums over translates of a fixed set. Nearby translates may share almost all their coordinates, so counting the translates alone can substantially overestimate the maximum. The useful information is how their overlaps are distributed. The purpose of this paper is to make that principle explicit.

\subsection{Background and scope}

The geometric study of expected suprema was shaped by Dudley's entropy bound for Gaussian processes \cite{dudley}. Fernique's work on stationary Gaussian processes and majorizing measures \cite{fernique} identified the special role of invariant geometry. Talagrand's majorizing-measure theorem \cite{talagrand} completed the characterization for general Gaussian processes. In this setting, the canonical distance records all increment distributions, and the problem is to organize that metric information across scales.

For Bernoulli processes, a purely quadratic description misses the boundedness of the coordinates. The general characterization proved by Bednorz and Lata\l a \cite{bernoulli} separates a part controlled by an $\ell^1$ bound from a part controlled by Gaussian chaining. The invariant comparison of Bednorz, Martynek and Meller \cite{invariant} provides a different interface: it compares a prescribed-law pairing with an integral of entropy--distortion costs. On a finite orbit, the marginal constraints in that variational problem can be solved exactly by symmetry.

The present paper develops this consequence and its applications. It is an expanded account of the supplied stationary note, not an independent proof of the general Bernoulli comparison. The non-elementary input is stated precisely in Section~\ref{sec:proof}; all subsequent orbit, dyadic, and overlap calculations are given here. The dyadic result recovers the formulation of \cite{stationary}. No novelty claim is made for the individual maximal estimates in the examples.

\subsection{The main formula}

Throughout, $I$ is a finite coordinate set and $\Gamma$ is a finite group of coordinate permutations. Let $\Omega\subset\R^I$ be one nonempty orbit of $\Gamma$, and let $(\eps_i)_{i\in I}$ be independent symmetric signs. Write
\[
 S(\Omega)=\E\max_{a\in\Omega}\langle a,\eps\rangle,
 \qquad
 S_{\mathrm{abs}}(\Omega)=\E\max_{a\in\Omega}\abs{\langle a,\eps\rangle}.
\]
All logarithms are natural. The notation $A\lesssim B$ means $A\le CB$ for a universal constant $C$, and $A\as B$ means both inequalities. A subscript records the permitted dependence of the constants.

The coordinate cost is quadratic at small arguments and bounded at large arguments:
\begin{equation}\label{eq:cost}
 \chi(u)=\min\{u^2,1\},\qquad
 D_t(a,b)=\sum_{i\in I}\chi\!\left(\frac{\abs{a_i-b_i}}{t}\right),
 \quad t>0.
\end{equation}
For any $a\in\Omega$, define
\begin{equation}\label{eq:partition}
 Z_\Omega(t)=\frac1{\abs{\Omega}}\sum_{b\in\Omega}e^{-D_t(a,b)},
 \qquad
 \cH(\Omega)=\int_0^\infty-\log Z_\Omega(t)\,dt.
\end{equation}
The value of $Z_\Omega(t)$ does not depend on $a$: a permutation carrying $a$ to another reference point also permutes the summation indices and preserves $D_t$.

\begin{theorem}[Invariant Bernoulli formula]\label{prop:orbit}
There are universal comparison constants such that
\begin{equation}\label{eq:orbit}
 S(\Omega)\as\cH(\Omega).
\end{equation}
For every $a_0\in\Omega$,
\begin{align}
 \E\max_{a\in\Omega}\abs{\langle a-a_0,\eps\rangle}
   &\as\cH(\Omega),\label{eq:anchor}\\
 S_{\mathrm{abs}}(\Omega)
   &\as\norm{a_0}_2+\cH(\Omega).\label{eq:absolute}
\end{align}
These statements include singleton orbits: then $\cH(\Omega)=S(\Omega)=0$, while the absolute supremum is comparable to $\norm{a_0}_2$.
\end{theorem}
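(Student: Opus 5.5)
The proof takes the prescribed-index-law comparison of \cite{invariant} as its external input, in the following form. For a finite set $T\subset\R^I$ and any probability law $\mu$ on $T$,
\[
 \E\max_{a\in T}\langle a,\eps\rangle \;\as\; \sup_{\mu}\int_0^\infty \Bigl(\text{entropy--distortion cost at scale } t\Bigr)\,dt .
\]
Here the cost at scale $t$ is an infimum, over couplings $\pi$ of $\mu$ with itself (a "self-pairing" of the index law), of
\[
 \KL(\pi\,|\,\mu\otimes\mu)\;+\;\int D_t(a,b)\,d\pi(a,b).
\]
Equivalently, it is the Donsker--Varadhan dual value $-\int\log\int e^{-D_t(a,b)}\,d\mu(b)\,d\mu(a)$, up to constants.

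The plan is to show that on an orbit the supremum over $\mu$ may be taken at the uniform (Haar) law, and that the optimal coupling there is the Gibbs kernel.

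For the upper bound I would take an arbitrary $\mu$ and symmetrize it by averaging $\gamma_*\mu$ over $\gamma\in\Gamma$. Both sides are invariant under $\Gamma$, since $D_t$ is preserved by coordinate permutations. The cost functional is concave in $\mu$: it is an infimum of affine functionals after the variational rewriting. Hence symmetrizing can only increase the cost, and the maximizing index law may be taken to be uniform on $\Omega$.

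For uniform $\mu$, I would compute the cost using the Gibbs self-coupling
\[
 \pi(a,b)\propto e^{-D_t(a,b)}.
\]
Because the row sums $\sum_b e^{-D_t(a,b)}=\abs{\Omega}Z_\Omega(t)$ are independent of $a$, and symmetrically the column sums are as well, the Gibbs kernel already has both marginals uniform. So the marginal constraint is satisfied exactly, and the Gibbs variational principle gives the cost exactly as $-\log Z_\Omega(t)$. Integrating over $t$ yields $S(\Omega)\as\cH(\Omega)$.

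The lower bound follows from the same formula by plugging in uniform $\mu$. In the direction where the comparison is a supremum over $\mu$, the uniform law is admissible and gives $\cH(\Omega)$. In the other direction the symmetrization argument gives the bound. I expect the main obstacle to be checking that the cited comparison really has this concave sup-inf structure, with cost exactly $\chi$-based, so that symmetrization is legitimate. If the cited statement instead fixes $\mu$ as a maximizer's law, I would symmetrize a maximizing index $\tau(\eps)$ by composing it with a uniform random $\gamma$. The resulting index law is then exactly uniform on $\Omega$ by transitivity, which gives the same reduction.

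For \eqref{eq:anchor}, write $\langle a-a_0,\eps\rangle$. The signed version satisfies
\[
 \E\max_a\langle a-a_0,\eps\rangle=S(\Omega),
\]
because $\E\langle a_0,\eps\rangle=0$. The absolute value costs at most a factor $2$, since $\max\abs{Y_a}\le \max Y_a+\max(-Y_a)$ and $-(a-a_0)$ ranges over the translate of $-\Omega$, which has the same law by symmetry of $\eps$. The lower bound holds because $0=a_0-a_0$ belongs to the set, so $\max_a\abs{Y_a}\ge\max_a Y_a$.

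For \eqref{eq:absolute}, use
\[
 \abs{\langle a,\eps\rangle}\le\abs{\langle a_0,\eps\rangle}+\abs{\langle a-a_0,\eps\rangle}
\]
together with $\E\abs{\langle a_0,\eps\rangle}\le\norm{a_0}_2$. For the reverse inequality, Khintchine gives $\E\abs{\langle a_0,\eps\rangle}\ge\norm{a_0}_2/\sqrt2$, and $S_{\mathrm{abs}}\ge S$. The singleton case is immediate: $D_t(a,a)=0$ gives $Z=1$ and so $\cH=0$.
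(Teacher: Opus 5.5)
\textbf{The concavity route has a gap.} The route you lead with fails at the step ``the cost functional is concave in $\mu$: it is an infimum of affine functionals.'' The quantity $\RD^\chi_\mu(t)$ is an infimum over $\Pi(\mu,\mu)$ of $\KL(P\Vert\mu\otimes\mu)+\E_PD_t$.

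\begin{itemize}
\item The feasible set depends on $\mu$ through \emph{both} marginals, and the reference measure $\mu\otimes\mu$ is quadratic in $\mu$.
\item So fixing a coupling or Markov kernel does not give a $\mu$-independent family of affine functionals. A kernel whose output law is $\mu$ is not feasible for other input laws.
\item The inf-of-affine representation belongs to the classical rate--distortion function with free output law. That is a different quantity.
\end{itemize}

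Likewise, $-\int\log\bigl(\int e^{-D_t(a,b)}\,d\mu(b)\bigr)\,d\mu(a)$ is the value with the second-marginal constraint dropped. In general it is only a lower bound for $\RD^\chi_\mu(t)$, not an equivalent ``up to constants''; the paper insists on checking the fixed marginals for this reason. It agrees with $\RD^\chi_\lambda(t)$ on the orbit only because the Gibbs kernel has both marginals uniform, which you do verify.

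Hence ``symmetrizing can only increase the cost'' is unproved. If you want a concavity argument, run it on $B_\eps(\mu)$ instead. It is a maximal transport value between $\mu$ and $\Law(\eps)$, so it is concave and $\Gamma$-invariant in $\mu$, and therefore maximized at $\lambda$.

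\textbf{Your fallback is the paper's proof.} It is exactly Lemma~\ref{lem:uniform} and closes the gap, but state the coupling correctly. Take $U$ uniform on $\Gamma$ and independent of $\eps$, and set $A=U\tau(\eps)$, $\eps'=U\eps$. The signs must move with the index: composing $\tau(\eps)$ with $U$ while keeping $\eps$ fixed destroys maximality. This yields $B_\eps(\lambda)=S(\Omega)$.

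The imported per-law comparison $B_\eps(\mu)\asymp R^\chi(\mu)$ at $\mu=\lambda$, together with your Gibbs computation, then gives both directions of \eqref{eq:orbit} at once. No supremum over index laws is needed.

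\textbf{Remaining parts.} The anchored and absolute bounds match the paper, with Szarek's constant $1/\sqrt2$ in place of the H\"older constant $3^{-1/2}$. One correction: $Y_{a_0}=0$ is what justifies $\max_a\abs{Y_a}\le\max_aY_a+\max_a(-Y_a)$ in the upper bound. You invoke it instead in the lower bound, where it is not needed.
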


The two terms in \eqref{eq:absolute} have different meanings. The entropy integral measures variation across the orbit. The norm term measures the size of one random variable, which need not disappear when the orbit has no variation. Confusing these roles would already give the wrong answer for a constant function $f$.

The integrals are finite. Writing $\E_{b\in\Omega}$ for uniform averaging, the diagonal summand gives $Z_\Omega(t)\ge\abs{\Omega}^{-1}$, while Jensen's inequality gives
\[
 -\log Z_\Omega(t)
 \le \frac1{\abs{\Omega}}\sum_{b\in\Omega}D_t(a,b)
 \le \frac{\E_{b\in\Omega}\norm{a-b}_2^2}{t^2}.
\]

For the group-indexed process above, take $a_g=(f(gx))_{x\in G}$. Coordinate permutations $x\mapsto hx$ send $a_g$ to $a_{gh}$, so these vectors form a single orbit. Each distinct vector has the same number of representatives in $G$, by the orbit--stabilizer identity. Consequently
\begin{equation}\label{eq:regular}
 Z_f(t)=\frac1{\abs G}\sum_{g\in G}
 \exp\!\left\{-\sum_{x\in G}
 \min\!\left(\frac{\abs{f(gx)-f(x)}^2}{t^2},1\right)\right\}.
\end{equation}
Thus $\E\max_g X_g\as\int_0^\infty-\log Z_f(t)\,dt$, and the absolute version has the additional term $\norm f_2$, with respect to unnormalized counting measure. No bound on $\norm f_\infty$ is required.

The proof has two separate parts. First, symmetry makes the uniform index law optimal. Second, the same symmetry makes the Gibbs kernel a coupling with both required marginals. Section~\ref{sec:dyadic} converts the resulting integral to dyadic ball volumes. Sections~\ref{sec:overlap}--\ref{sec:finite} apply it to overlap geometry, and Section~\ref{sec:tails} changes the coordinate law without changing the sets.

\section{Why the variational problem simplifies on an orbit}\label{sec:proof}

\subsection{The imported comparison}

For a finitely supported probability measure $\mu$ on $\R^I$, let
\[
 B_\eps(\mu)=\sup\bigl\{\E\langle A,\eps\rangle:\ \Law(A)=\mu\bigr\}.
\]
The supremum is over all couplings of $A$ with a vector of independent symmetric signs; $A$ is not required to be independent of $\eps$. Let $\Pi(\mu,\mu)$ denote the probability measures on the product with both marginals equal to $\mu$, and set
\begin{align}
 \RD^\chi_\mu(t)
  &=\inf_{P\in\Pi(\mu,\mu)}
     \left\{\KL(P\Vert\mu\otimes\mu)+\E_P D_t(A,B)\right\},\label{eq:RD}\\
 R^\chi(\mu)&=\int_0^\infty\RD^\chi_\mu(t)\,dt.\nonumber
\end{align}
For finite probability measures, $\KL(P\Vert Q)=\sum_zP(z)\log(P(z)/Q(z))$, with zero summands omitted and value $+\infty$ if $P$ charges a point where $Q$ vanishes. In \eqref{eq:RD}, this relative entropy is the mutual information of the two coordinates.

The Bernoulli consequence of \cite[Eq.~(31)]{invariant} states that
\begin{equation}\label{eq:external}
 B_\eps(\mu)\as R^\chi(\mu)
 \qquad\text{for every finitely supported }\mu,
\end{equation}
with universal constants. We use \eqref{eq:external} as an external theorem. The fixed marginals in \eqref{eq:RD} are essential; they will be checked explicitly rather than discarded in a pointwise minimization.

\subsection{Symmetrizing the maximizing index}

\begin{lemma}\label{lem:uniform}
If $\lambda$ is uniform on $\Omega$, then $B_\eps(\lambda)=S(\Omega)$.
\end{lemma}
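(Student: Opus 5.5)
We prove the two inequalities $B_\eps(\lambda)\ge S(\Omega)$ and $B_\eps(\lambda)\le S(\Omega)$ separately. The upper bound is immediate. Any coupling $(A,\eps)$ with $\Law(A)=\lambda$ has $A\in\Omega$ almost surely, so $\langle A,\eps\rangle\le\max_{a\in\Omega}\langle a,\eps\rangle$ pointwise. Taking expectations gives $B_\eps(\lambda)\le S(\Omega)$. This direction does not use uniformity of $\lambda$.

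For the lower bound we build a coupling with the right marginal that attains $S(\Omega)$. First choose a measurable maximizing selection $a^*(\eps)\in\Omega$ with $\langle a^*(\eps),\eps\rangle=\max_{a\in\Omega}\langle a,\eps\rangle$, breaking ties by any fixed rule; this is a finite choice. The law of $a^*(\eps)$ need not be uniform, so we symmetrize. Let $\sigma$ be uniform on $\Gamma$ and independent of $\eps$. For a permutation $\sigma$ write $(\sigma a)_i=a_{\sigma^{-1}(i)}$, and let $\sigma$ act on signs in the same way. Define $\eps'=\sigma\eps$ and $A=\sigma a^*(\eps)$.

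Two facts then finish the argument.

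\textbf{(i) $\eps'$ is a vector of independent symmetric signs.} Conditionally on $\sigma$, the vector $\sigma\eps$ is a coordinate permutation of an i.i.d.\ vector, so it has the same law.

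\textbf{(ii) The coupling attains the maximum.} Since $\langle\sigma a,\sigma\eps\rangle=\langle a,\eps\rangle$ and $\sigma$ maps $\Omega$ bijectively onto itself,
\[
\langle A,\eps'\rangle=\langle a^*(\eps),\eps\rangle=\max_{a\in\Omega}\langle a,\eps\rangle=\max_{a\in\Omega}\langle a,\eps'\rangle .
\]
Hence $\E\langle A,\eps'\rangle=S(\Omega)$.

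It remains to check that $\Law(A)=\lambda$, which is the one step needing care. Condition on $\eps$, so that $a^*(\eps)=a$ is fixed. Then $\sigma a$ is the image of a uniform group element under the orbit map $\sigma\mapsto\sigma a$. Since $\Omega$ is the orbit of $a$, every fiber of this map is a coset of the stabilizer of $a$, and all fibers have cardinality $\abs{\Gamma}/\abs{\Omega}$. So $\sigma a$ is uniform on $\Omega$ for every $a$, and after averaging over $\eps$ we get $\Law(A)=\lambda$.

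Thus $(A,\eps')$ is admissible in the definition of $B_\eps(\lambda)$, and $B_\eps(\lambda)\ge S(\Omega)$. Combined with the upper bound, $B_\eps(\lambda)=S(\Omega)$.
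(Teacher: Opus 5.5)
Your proof is correct and follows the paper's argument: the pointwise upper bound, then a tie-broken maximizing index symmetrized by an independent uniform element of $\Gamma$ acting simultaneously on the index and on the signs. Your orbit--stabilizer count showing that $\sigma a^*(\eps)$ is conditionally uniform on $\Omega$ spells out the paper's one-line appeal to transitivity.
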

\begin{proof}
Every coupling satisfies $\langle A,\eps\rangle\le\max_{a\in\Omega}\langle a,\eps\rangle$, so $B_\eps(\lambda)\le S(\Omega)$.

For the reverse inequality, choose a maximizing index $a(\eps)\in\Omega$, breaking ties by a fixed ordering. Let $U$ be independent and uniform on $\Gamma$, and put
\[
 A=U a(\eps),\qquad \eps'=U\eps.
\]
Conditional on $\eps$, the point $A$ is uniform on $\Omega$, because the action is transitive. The vector $\eps'$ has the same law as $\eps$. Finally, coordinate permutations preserve the pairing, so
\[
 \langle A,\eps'\rangle
 =\langle a(\eps),\eps\rangle
 =\max_{a\in\Omega}\langle a,\eps\rangle.
\]
This is an admissible coupling attaining the desired value.
\end{proof}

\subsection{The Gibbs self-coupling}

\begin{lemma}[Invariant entropy identity]\label{lem:gibbs}
For every $t>0$,
\begin{equation}\label{eq:haar}
 \RD^\chi_\lambda(t)=-\log Z_\Omega(t).
\end{equation}
The minimizing self-coupling is
\begin{equation}\label{eq:gibbs}
 Q_t(a,b)=\frac{\lambda(a)\lambda(b)e^{-D_t(a,b)}}{Z_\Omega(t)}.
\end{equation}
\end{lemma}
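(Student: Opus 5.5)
The plan is to prove a lower bound valid for every admissible coupling, and then check that $Q_t$ attains it. This is the standard Donsker--Varadhan / Gibbs variational argument, with the marginal constraint handled by symmetry.

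\emph{Lower bound.} Fix $t>0$ and write $D=D_t$, and $K(a,b)=\lambda(a)\lambda(b)e^{-D(a,b)}$. Summing over both arguments gives the total mass
\[
 \sum_{a,b}K(a,b)=\frac{1}{\abs{\Omega}}\sum_a\lambda(a)\sum_b e^{-D(a,b)}=Z_\Omega(t),
\]
because the inner normalized sum equals $Z_\Omega(t)$ for every $a$, as remarked after \eqref{eq:partition}. Hence $Q_t=K/Z_\Omega(t)$ is a probability measure on $\Omega\times\Omega$.

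Take any $P\in\Pi(\lambda,\lambda)$ with finite relative entropy. Then $P\ll\lambda\otimes\lambda$, so $P\ll Q_t$, since $Q_t$ has full support. A direct computation with the logarithm of the density gives
\[
 \KL(P\Vert\lambda\otimes\lambda)+\E_P D
 =\KL(P\Vert Q_t)-\log Z_\Omega(t)\ \ge\ -\log Z_\Omega(t).
\]
This inequality does not even use the marginal constraint. Therefore $\RD^\chi_\lambda(t)\ge-\log Z_\Omega(t)$.

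\emph{Attainment.} It remains to show that $Q_t\in\Pi(\lambda,\lambda)$. Then taking $P=Q_t$ gives $\KL(Q_t\Vert Q_t)=0$, so the bound is an equality and $Q_t$ is the minimizer. Strict convexity of $\KL$ also makes the minimizer unique.

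The first marginal of $Q_t$ is
\[
 \frac{\lambda(a)}{Z_\Omega(t)}\cdot\frac1{\abs{\Omega}}\sum_b e^{-D(a,b)}=\lambda(a),
\]
again because the inner normalized sum equals $Z_\Omega(t)$ independently of $a$; this is the invariance of $D_t$ under the transitive action of $\Gamma$. Since $D_t(a,b)=D_t(b,a)$, the kernel is symmetric, so the second marginal is also $\lambda$.

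The only genuine obstacle is this marginal check. For a non-invariant $\mu$, the Gibbs kernel $\mu\otimes\mu\,e^{-D}/Z$ does not have marginals $\mu$, and the pointwise minimization would give only a lower bound. Transitivity, through the $a$-independence of $\sum_b e^{-D_t(a,b)}$, is exactly what closes the gap.
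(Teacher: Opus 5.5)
Your proposal is correct and follows the paper's proof essentially step for step: you check $Q_t\in\Pi(\lambda,\lambda)$ using the $a$-independence of $Z_\Omega(t)$ together with the symmetry of $D_t$, and you apply the exact identity $\KL(P\Vert\lambda\otimes\lambda)+\E_P D_t=-\log Z_\Omega(t)+\KL(P\Vert Q_t)$ with nonnegativity of relative entropy. Your remarks that the lower bound does not use the marginal constraint, and that the minimizer is unique because $\KL(P\Vert Q_t)=0$ forces $P=Q_t$, are correct small additions.
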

\begin{proof}
The row sums of \eqref{eq:gibbs} equal $\lambda(a)$ by the definition of $Z_\Omega(t)$ and its independence of $a$. Symmetry of $D_t$ gives the same column sums. Hence $Q_t\in\Pi(\lambda,\lambda)$.

For any $P\in\Pi(\lambda,\lambda)$, substitution of \eqref{eq:gibbs} yields the exact identity
\begin{equation}\label{eq:KL}
 \KL(P\Vert\lambda\otimes\lambda)+\E_P D_t
 =-\log Z_\Omega(t)+\KL(P\Vert Q_t).
\end{equation}
Relative entropy is nonnegative, and the last term vanishes at $P=Q_t$. This proves both assertions.
\end{proof}

The identity \eqref{eq:KL} is the Haar calculation in \cite[Section~4, Eq.~(23)]{invariant}, written here with all marginal constraints visible. It is precisely transitivity that permits one common normalizing constant.

\begin{proof}[Proof of Theorem~\ref{prop:orbit}]
Combining \eqref{eq:external}, Lemma~\ref{lem:uniform}, and Lemma~\ref{lem:gibbs} gives \eqref{eq:orbit}.

Fix $a_0\in\Omega$ and put $Y_a=\langle a-a_0,\eps\rangle$. Then $Y_{a_0}=0$ and
$\E\max_aY_a=S(\Omega)$. Since the joint law of $(Y_a)$ is symmetric,
\[
 S(\Omega)\le\E\max_a\abs{Y_a}
 \le\E\max_aY_a+\E\max_a(-Y_a)=2S(\Omega).
\]
This proves \eqref{eq:anchor}.

For a Rademacher sum $R_a=\langle a,\eps\rangle$, the elementary moment identities
\[
 \E R_a^2=\norm a_2^2,\qquad
 \E R_a^4\le3\norm a_2^4
\]
and H\"older's inequality give
\begin{equation}\label{eq:khintchine}
 3^{-1/2}\norm a_2\le\E\abs{R_a}\le\norm a_2.
\end{equation}
Indeed, $\norm{R_a}_2\le\norm{R_a}_1^{1/3}\norm{R_a}_4^{2/3}$. The triangle inequality and \eqref{eq:anchor} now give the upper bound in \eqref{eq:absolute}. The lower bound follows from
\[
 S_{\mathrm{abs}}(\Omega)\ge S(\Omega),\qquad
 S_{\mathrm{abs}}(\Omega)\ge\E\abs{\langle a_0,\eps\rangle}.
\]
The maximum of these two lower bounds controls half their sum.
\end{proof}

\begin{remark}[What is, and is not, removed]
After importing \eqref{eq:external}, the stationary application requires no further replication, Bernoulli decomposition, or construction of admissible partitions. This is a simplification of the application, not a claim that the external comparison itself is elementary.
\end{remark}

\section{The equivalent dyadic ball-volume formula}\label{sec:dyadic}

We next recover the scale description in \cite{stationary}. It is useful when the sizes of clipped-distance balls are easier to estimate than the partition function. Fix $r>1$, and for $j\in\Z$ set
\[
 \varphi_j(a,b)=D_{r^{-j}}(a,b)
 =\sum_{i\in I}\min\{r^{2j}\abs{a_i-b_i}^2,1\}.
\]
Let $N_0=1$ and $N_n=2^{2^n}$ for $n\ge1$. For any reference point $a\in\Omega$, define
\begin{equation}\label{eq:jn}
 j_n^\lambda=\sup\bigl\{j\in\Z:\
 \lambda\{b:\varphi_j(a,b)\le2^n\}\ge N_n^{-1}\bigr\},
 \qquad r^{-\infty}=0.
\end{equation}
These values are independent of $a$. The non-strict inequality in the mass condition is part of the convention; in particular, level zero requires the whole orbit.

\begin{theorem}[Dyadic form]\label{thm:dyadic}
With constants depending only on $r$,
\begin{equation}\label{eq:dyadic}
 \cH(\Omega)\as_r\sum_{n\ge0}2^n r^{-j_n^\lambda}.
\end{equation}
Consequently the sum in \eqref{eq:dyadic} estimates the signed supremum and the anchored absolute supremum. The unanchored absolute supremum has the additional term $\norm{a_0}_2$.
\end{theorem}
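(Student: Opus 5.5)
The plan is to discretize the integral $\cH(\Omega)=\int_0^\infty F(t)\,dt$, where $F(t)=-\log Z_\Omega(t)$, along the geometric grid $t=r^{-j}$. Since $D_t$ is pointwise nonincreasing in $t$, $F$ is nonincreasing. Comparing $\int_{r^{-j}}^{r^{-j+1}}F$ with $F$ at the endpoints then gives
\[
\cH(\Omega)\as_r\sum_{j\in\Z}r^{-j}F(r^{-j}).
\]
Throughout I will use two monotonicity facts. First, $\varphi_j$ is nondecreasing in $j$, so the set of $j$ admissible in \eqref{eq:jn} is downward closed. Second, $n\mapsto j_n^\lambda$ is nondecreasing, because $2^n$ grows and $N_n^{-1}$ shrinks. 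I will also check the edge cases. Each $j_n^\lambda>-\infty$, since $\varphi_j\to0$ as $j\to-\infty$. And $j_n^\lambda=+\infty$ once $N_n\ge\abs\Omega$, because the diagonal point alone has mass $\abs\Omega^{-1}$ and $\varphi_j(a,a)=0$. So the sum in \eqref{eq:dyadic} is finite, and the terms with $j_n^\lambda=+\infty$ vanish.

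\emph{Upper bound.} The starting point is the trivial estimate
\[
Z_\Omega(r^{-j})\ge\lambda\{b:\varphi_j(a,b)\le2^n\}\,e^{-2^n}.
\]
\begin{itemize}
\item If $j_{n-1}^\lambda<j\le j_n^\lambda$ with $n\ge1$, this gives $F(r^{-j})\le 2^n(1+\log2)$. Summing $r^{-j}$ over $j>j_{n-1}^\lambda$ bounds this block's contribution by $\lesssim_r 2^n r^{-j_{n-1}^\lambda}=2\cdot2^{n-1}r^{-j_{n-1}^\lambda}$.
\item For $j\le j_0^\lambda$, I use the scaling inequality $\min\{r^{2j}x,1\}\le r^{2(j-j_0)}\min\{r^{2j_0}x,1\}$, valid for $j\le j_0$. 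Combined with $\varphi_{j_0^\lambda}\le1$ on the whole orbit, Jensen's inequality gives $F(r^{-j})\le r^{2(j-j_0^\lambda)}$. The geometric sum $\sum_{j\le j_0^\lambda}r^{-j}r^{2(j-j_0^\lambda)}$ is $\lesssim_r r^{-j_0^\lambda}$.
\end{itemize}
Adding the blocks yields $\cH(\Omega)\lesssim_r\sum_n2^nr^{-j_n^\lambda}$.

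\emph{Lower bound.} For $n\ge1$ and $j>j_n^\lambda$, split $Z_\Omega(r^{-j})$ into the part from the ball $\{\varphi_j\le2^n\}$ and the part from its complement:
\[
Z_\Omega(r^{-j})\le N_n^{-1}+e^{-2^n}\le2\cdot2^{-2^n}.
\]
Hence $F(r^{-j})\ge(2^n-1)\log2\gtrsim2^n$. For a fixed $j$, let $n(j)$ be the largest $n\ge1$ with $j_n^\lambda<j$. Then
\[
\sum_{n\ge1:\,j_n^\lambda<j}2^n\le2\cdot2^{n(j)}\lesssim F(r^{-j}).
\]
Writing $r^{-j_n^\lambda}\as_r\sum_{j>j_n^\lambda}r^{-j}$ and exchanging the order of summation gives $\sum_{n\ge1}2^nr^{-j_n^\lambda}\lesssim_r\sum_jr^{-j}F(r^{-j})\as_r\cH(\Omega)$.

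The main obstacle is the level-zero term $r^{-j_0^\lambda}$. There the argument above fails: exceeding level zero only says that \emph{some} point leaves the ball, which can carry mass as small as $\abs\Omega^{-1}$, so $F$ need not be large. I plan to handle it probabilistically instead. If $j_0^\lambda<\infty$, there is $b\in\Omega$ with $\varphi_{j_0^\lambda+1}(a,b)>1$. This forces $r^{2(j_0^\lambda+1)}\norm{a-b}_2^2>1$, i.e. $\norm{a-b}_2>r^{-j_0^\lambda-1}$. Then \eqref{eq:khintchine} and \eqref{eq:anchor} of Theorem~\ref{prop:orbit} (anchored at $a_0=a$) give
\[
\cH(\Omega)\gtrsim\E\max_{c\in\Omega}\abs{\langle c-a,\eps\rangle}\ge\E\abs{\langle b-a,\eps\rangle}\ge3^{-1/2}\norm{b-a}_2\gtrsim_r r^{-j_0^\lambda}.
\]
The final assertions about the signed, anchored and unanchored suprema then follow by substituting \eqref{eq:dyadic} into Theorem~\ref{prop:orbit}.
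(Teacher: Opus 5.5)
Your discretization $\cH(\Omega)\as_r\sum_j r^{-j}F(r^{-j})$ is sound, and so are both directions for the levels $n\ge1$; they match the paper's threshold argument. The gap is in the upper bound for $j\le j_0^\lambda$. The scalar inequality $\min\{r^{2j}x,1\}\le r^{2(j-j_0)}\min\{r^{2j_0}x,1\}$ is false in general. With $\theta=r^{2(j-j_0)}\le1$ and $y=r^{2j_0}x$, it reads $\min\{\theta y,1\}\le\theta\min\{y,1\}$. This fails whenever $y>1$: for instance, $\theta=1/4$ and $y=16$ give $1\le 1/4$. So it holds only when nothing is clipped at level $j_0^\lambda$.

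The hypothesis $\varphi_{j_0^\lambda}(a,b)\le1$ does not exclude clipping by itself. A single coordinate could contribute exactly $1$, with $r^{2j_0}\abs{a_i-b_i}^2$ arbitrarily large, while all the others contribute $0$. This is not a technicality. Take the two-point set $\{0,Ke_1\}$ with uniform weights; it is not a permutation orbit, but $F$ and $j_n^\lambda$ still make sense. There every $j_n^\lambda=+\infty$, so the right side of \eqref{eq:dyadic} is $0$, while $\int_0^\infty F\,dt\as K$. Your upper-bound argument never uses the orbit structure, so it cannot be correct as written.

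The missing idea is the paper's level-zero identification. Points of a permutation orbit have equal coordinate sums, so distinct $a,b\in\Omega$ differ in at least two coordinates. Hence if $b\ne a$ and $\varphi_{j_0^\lambda}(a,b)\le1$, no term can be clipped: a clipped term equals $1$ and would force every other coordinate difference to vanish. Therefore $r^{2j_0^\lambda}\norm{a-b}_2^2=\varphi_{j_0^\lambda}(a,b)\le1$ for every $b$. Then for $j\le j_0^\lambda$,
\[
\varphi_j(a,b)\le r^{2j}\norm{a-b}_2^2\le r^{2(j-j_0^\lambda)},
\]
and your Jensen step and geometric sum go through.

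For the matching lower bound $\cH(\Omega)\gtrsim_r r^{-j_0^\lambda}$, you invoke Theorem~\ref{prop:orbit} and \eqref{eq:khintchine}. This is logically valid, but it makes a purely deterministic statement about an integral depend on the external comparison \eqref{eq:external}. The paper proves it elementarily. Put $\Delta=\diam(\Omega,\norm{\cdot}_2)$, so that $\Delta>r^{-j_0^\lambda-1}$ by your own observation. For $t\ge\Delta$ there is no clipping, and
\[
F(t)\ge(1-e^{-1})\,\frac{\E_{b\in\Omega}\norm{a-b}_2^2}{t^2}\ge(1-e^{-1})\,\frac{\Delta^2}{4t^2},
\]
where transitivity supplies the averaged bound $\E_{b\in\Omega}\norm{a-b}_2^2\ge\Delta^2/4$. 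Integrating over $[\Delta,\infty)$ gives $\cH(\Omega)\gtrsim\Delta$.
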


For $a_g=(f(gx))_x$, uniform measure on $G$ pushes forward to $\lambda$. Thus \eqref{eq:jn} is exactly the definition using normalized counting measure $\mu$ and
$\varphi_j(g,h)=\sum_x\min\{r^{2j}\abs{f(gx)-f(hx)}^2,1\}$.

\subsection{From a Laplace transform to threshold radii}

Put $\Delta=\diam(\Omega,\norm\cdot_2)$ and $F(t)=-\log Z_\Omega(t)$. For $n\ge1$, let
\begin{equation}\label{eq:tau}
 \tau_n=\inf\bigl\{t>0:\
 \lambda\{b:D_t(a,b)\le2^n\}\ge2^{-2^n}\bigr\}.
\end{equation}
The sequence $(\tau_n)$ is nonincreasing, $\tau_n\le\Delta$, and it is eventually zero. For the last assertion, the point $a$ alone has sufficient mass once $2^{-2^n}\le\abs{\Omega}^{-1}$.

\begin{lemma}\label{lem:laplace}
For every non-singleton orbit,
\begin{equation}\label{eq:laplace}
 \int_0^\infty F(t)\,dt\as\Delta+\sum_{n\ge1}2^n\tau_n.
\end{equation}
\end{lemma}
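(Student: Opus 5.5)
We compare $F(t)=-\log Z_\Omega(t)$ with the level sets of the ball-mass function $m_t(s)=\lambda\{b:D_t(a,b)\le s\}$. Two elementary facts drive everything. $D_t$ is nonincreasing in $t$ and bounded by $\abs{I}$. Also $D_t(a,b)\le\norm{a-b}_2^2/t^2$, so every $b$ satisfies $D_t(a,b)\le\Delta^2/t^2$.

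We then use the two-sided Laplace bounds
\[
 m_t(s)e^{-s}\le Z_\Omega(t)\le m_t(s)+e^{-s}.
\]
The left inequality is obtained by restricting the sum to the ball. The right one comes from splitting the sum inside and outside the ball.

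We split the integral into $t\ge\Delta$, the intervals $[\tau_{n+1},\tau_n)$ for $n\ge1$, and $[\tau_1,\Delta)$. Note that $\tau_n\to0$ after finitely many steps, so these intervals exhaust $(0,\infty)$.

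\textbf{Upper bound.} For $t\ge\tau_n$, monotonicity and the definition of $\tau_n$ (passing to the limit from the right) give $m_t(2^n)\ge 2^{-2^n}$. Hence $F(t)\le 2^n+2^n\log2\le 2\cdot2^n$. Integrating over $[\tau_{n+1},\tau_n)$ yields a contribution at most $2^{n+2}\tau_{n+1}\le 4\cdot 2^{n+1}\tau_{n+1}$, which is summable against the right side.

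On $[\tau_1,\Delta)$ we have $F\le 4$, giving $O(\Delta)$. For $t\ge\Delta$, the Jensen bound already recorded gives $F(t)\le\Delta^2/t^2$, which integrates to at most $\Delta$. Below the last nonzero $\tau_n$ there is nothing left to integrate, since $\tau_n=0$ eventually.

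\textbf{Lower bound.} For $t<\tau_n$ we have $m_t(2^n)<2^{-2^n}$. The upper Laplace bound with $s=2^n$ then gives
\[
 Z_\Omega(t)\le 2^{-2^n}+e^{-2^n}\le 2\cdot 2^{-2^n},
\]
so $F(t)\ge 2^n\log2-\log 2\ge c2^n$ for $n\ge2$. Hence $\int_0^{\tau_n}F\ge c2^n\tau_n$ for each $n\ge2$.

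These bounds do not add directly, because the intervals $[0,\tau_n]$ overlap. Instead we integrate on disjoint pieces:
\[
 \int F\ge\sum_n\int_{\tau_{n+1}}^{\tau_n}c2^n\,dt=c\sum_n 2^n(\tau_n-\tau_{n+1}).
\]
Abel summation turns the right-hand side into $c\bigl(\sum_n 2^{n-1}\tau_n\bigr)$ up to boundary terms. The boundary terms are nonnegative because $\tau_n$ is eventually zero, which gives $\sum 2^n\tau_n$. The indices $n=1$ (and possibly $n=2$) need separate care, since $\tau_1\le\Delta$ and those terms are absorbed into $\Delta$.

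\textbf{The $\Delta$ term.} This is the step I expect to be the main obstacle. Pick $b$ with $\norm{a-b}_2=\Delta$ (up to a constant factor, using any $a$ by transitivity). For $t\le\Delta/2$ we want $F(t)\gtrsim1$. The danger is that $D_t(a,b)$ may be large while most of the orbit is close to $a$.

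Use the triangle inequality: $\norm{a-c}_2+\norm{c-b}_2\ge\Delta$. By transitivity and symmetry, $\lambda\{c:\norm{a-c}_2\ge\Delta/2\}\ge1/2$. Here the fraction of the orbit at distance $\ge\Delta/2$ from $a$ equals that from $b$, and the two sets cover $\Omega$.

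It remains to convert $\ell^2$ distance into $D_t$. The clipping by $\chi$ means that $\norm{a-c}_2\ge\Delta/2$ does not force $D_t(a,c)\ge1$ when a single coordinate carries all the difference. However, if some coordinate has $\abs{a_i-c_i}\ge t$, then $D_t\ge1$ directly. Otherwise $D_t(a,c)=\norm{a-c}_2^2/t^2\ge 1$ when $t\le\Delta/2$. Thus $D_t(a,c)\ge1$ on a set of mass $\ge1/2$, so $Z_\Omega(t)\le \tfrac12+\tfrac12e^{-1}<1$ and $F(t)\ge c$ for $t\le\Delta/2$. Integrating gives $\int F\gtrsim\Delta$. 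The non-singleton hypothesis guarantees $\Delta>0$, so that this comparison is meaningful.
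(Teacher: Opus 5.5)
Your argument is correct up to one index slip. On $[\tau_{n+1},\tau_n)$ the level-$(n+1)$ bound gives a contribution at most $2^{n+2}(\tau_n-\tau_{n+1})\le4\cdot2^n\tau_n$, not $2^{n+2}\tau_{n+1}$. The latter is false on the last piece, where $\tau_{n+1}=0<\tau_n$ while $F>0$. The corrected bound is still summable against the right-hand side.

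The upper bound and the $\sum_n2^n\tau_n$ half of the lower bound follow the paper. You use the same splitting of $Z_\Omega(t)$ at the ball of radius $2^n$ and the same decomposition of $(0,\tau_1)$ into the pieces $[\tau_{n+1},\tau_n)$. For the lower bound, your disjoint pieces with Abel summation are equivalent to the paper's pointwise remark that $\sum_{n:\,t<\tau_n}2^n$ is at most twice its largest term, hence $\lesssim F(t)$. Your restriction to $n\ge2$ is unnecessary, since $(2^n-1)\log2\ge2^{n-1}\log2$ already holds for $n=1$.

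The genuine difference is how you get $\int_0^\infty F\gtrsim\Delta$. The paper uses large scales. For $t\ge\Delta$ no clipping occurs, so $F(t)\ge1-Z_\Omega(t)\ge(1-e^{-1})\E_b\norm{a-b}_2^2/t^2$. Averaging $\norm{u-v}_2^2\le2\norm{u-b}_2^2+2\norm{b-v}_2^2$ gives $\E_b\norm{a-b}_2^2\ge\Delta^2/4$. Together with Jensen this yields the two-sided estimate $F(t)\asymp\Delta^2/t^2$ on $[\Delta,\infty)$, hence $\int_\Delta^\infty F\asymp\Delta$.

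You use small scales instead. At least half the orbit lies at $\ell^2$-distance $\ge\Delta/2$ from $a$. For $t\le\Delta/2$, each such point has $D_t(a,c)\ge1$, because either one coordinate saturates or no clipping occurs. Hence $F\ge-\log\frac{1+e^{-1}}2$ on $(0,\Delta/2]$.

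Both routes rest on the same triangle-inequality-plus-transitivity input. The paper's avoids clipping altogether and shows the tail $[\Delta,\infty)$ alone carries the diameter term, with matching bounds there. Yours needs the clipping dichotomy and uses only the Jensen upper bound on the tail. In exchange, it shows $F$ is bounded below by a universal constant at every scale below $\Delta/2$.
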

\begin{proof}
We first locate the height of $F$ at the threshold $\tau_n$ and then account for the large-scale tail.

\step{Threshold estimates}
If $t<\tau_n$, split the average defining $Z_\Omega(t)$ according to whether $D_t(a,b)\le2^n$. This gives
\[
 Z_\Omega(t)\le2^{-2^n}+e^{-2^n}\le2\,2^{-2^n},
 \qquad
 F(t)\ge(2^n-1)\log2\ge c2^n.
\]
If $t>\tau_n$, monotonicity of the balls gives
\[
 Z_\Omega(t)\ge2^{-2^n}e^{-2^n},
 \qquad F(t)\le(1+\log2)2^n.
\]
For each $t$, the geometric sum of $2^n$ over levels with $t<\tau_n$ is bounded by a constant times its largest term, and therefore by $CF(t)$. Integrating yields
\begin{equation}\label{eq:tau-lower}
 \sum_{n\ge1}2^n\tau_n\lesssim\int_0^\infty F(t)\,dt.
\end{equation}
Conversely, on $(\tau_{n+1},\tau_n)$ the upper estimate at level $n+1$ applies. The intervals with positive length partition $(0,\tau_1)$, up to endpoints, because the sequence is eventually zero. Hence
\[
 \int_0^{\tau_1}F(t)\,dt
 \lesssim\sum_{n\ge1}2^{n+1}(\tau_n-\tau_{n+1})
 \lesssim\sum_{n\ge1}2^n\tau_n.
\]
On $(\tau_1,\Delta)$, the level-one estimate bounds $F$ by a universal constant.

\step{The diameter term}
If $t\ge\Delta$, no clipping occurs, and $D_t(a,b)=\norm{a-b}_2^2/t^2\le1$. The inequalities
$1-e^{-u}\ge(1-e^{-1})u$ for $0\le u\le1$ and $-\log\E e^{-U}\le\E U$ imply
\[
 F(t)\as\frac{\E_{b\in\Omega}\norm{a-b}_2^2}{t^2}.
\]
The numerator is independent of $a$. Choose $u,v\in\Omega$ with $\norm{u-v}_2=\Delta$ and average
$\norm{u-v}_2^2\le2\norm{u-b}_2^2+2\norm{b-v}_2^2$. This gives
\[
 \frac{\Delta^2}{4}\le\E_{b\in\Omega}\norm{a-b}_2^2\le\Delta^2.
\]
It follows that $\int_\Delta^\infty F(t)\,dt\as\Delta$. Combining this with the threshold bounds proves \eqref{eq:laplace}.
\end{proof}

\subsection{The geometric grid and level zero}

\begin{proof}[Proof of Theorem~\ref{thm:dyadic}]
For $\tau_n>0$, the admissible set in \eqref{eq:tau} is $[\tau_n,\infty)$. This follows from continuity of each $D_t(a,b)$ and finiteness of the orbit. Comparing with the grid $t=r^{-j}$ gives
\begin{equation}\label{eq:grid}
 \tau_n\le r^{-j_n^\lambda}<r\tau_n\qquad(n\ge1).
\end{equation}
When $\tau_n=0$, all positive scales are admissible and both sides of the comparison vanish.

It remains to identify level zero. Vectors in a permutation orbit have the same coordinate sum, so two distinct vectors cannot differ in only one coordinate. If $D_t(a,b)\le1$ and $a\ne b$, no coordinate contribution can equal~$1$: otherwise all other differences would have to vanish. Thus no clipping occurs and $\norm{a-b}_2\le t$. The converse is immediate. Transitivity makes the largest distance from a fixed point equal to $\Delta$, and consequently
\begin{equation}\label{eq:level-zero}
 \max_{b\in\Omega}D_t(a,b)\le1
 \quad\Longleftrightarrow\quad t\ge\Delta.
\end{equation}
For $\Delta>0$, this gives $\Delta\le r^{-j_0^\lambda}<r\Delta$. Now combine \eqref{eq:grid} with Lemma~\ref{lem:laplace}. For a singleton orbit, every $j_n^\lambda=\infty$ and both sides of \eqref{eq:dyadic} are zero.
\end{proof}

\section{Overlap profiles of set orbits}\label{sec:overlap}

Let $\cA$ be the orbit of a set $A\subset I$ under $\Gamma$. All members have cardinality $m=\abs A$. For $B\in\cA$, define
\begin{equation}\label{eq:overlap-profile}
 \kappa(A,B)=\abs{A\mathbin\triangle B}=2(m-\abs{A\cap B}),\qquad
 \cL_A(s)=-\log\!\left(\frac1{\abs{\cA}}\sum_{B\in\cA}e^{-s\kappa(A,B)}\right).
\end{equation}
The reference set $A$ is immaterial. We call $\cL_A$ the overlap profile. It is a Laplace-transform description of the distribution of symmetric-difference sizes, not an independence assumption about the random sums.

\begin{corollary}[Overlap formula]\label{cor:overlap}
For $\one_{\cA}=\{\one_B:B\in\cA\}$, put
\[
 H_A=\cL_A(1)+\frac12\int_0^1\cL_A(s)s^{-3/2}\,ds.
\]
Then, with universal constants,
\begin{align}
 \E\max_{B\in\cA}\sum_{i\in B}\eps_i&\as H_A,\label{eq:overlap-signed}\\
 \E\max_{B\in\cA}\left|\sum_{i\in B}\eps_i\right|
   &\as\sqrt m+H_A.\label{eq:overlap}
\end{align}
\end{corollary}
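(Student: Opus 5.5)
We apply Theorem~\ref{prop:orbit} to the orbit $\Omega=\one_{\cA}$, which is an orbit of $\Gamma$ acting on $\R^I$ by coordinate permutation. The absolute statement \eqref{eq:overlap} then follows from \eqref{eq:absolute}, since $\norm{\one_A}_2=\sqrt m$. So everything reduces to showing that $\cH(\Omega)\as H_A$. One point needs care: distinct sets give distinct indicator vectors, so uniform measure on $\cA$ is exactly $\lambda$ and no multiplicity issue arises.

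\textbf{Step 1: compute the partition function.} For indicator vectors, every coordinate difference lies in $\{0,1\}$, and exactly $\kappa(A,B)$ coordinates are nonzero. Hence
\[
 D_t(\one_A,\one_B)=\kappa(A,B)\min\{t^{-2},1\},
\]
which gives $-\log Z_\Omega(t)=\cL_A(\min\{t^{-2},1\})$. Therefore
\[
 \cH(\Omega)=\int_0^1\cL_A(1)\,dt+\int_1^\infty\cL_A(t^{-2})\,dt.
\]
In the second integral we substitute $s=t^{-2}$, so that $dt=-\tfrac12 s^{-3/2}ds$. This turns it into $\tfrac12\int_0^1\cL_A(s)s^{-3/2}\,ds$. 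Adding the two pieces, we get $\cH(\Omega)=H_A$ exactly, and not merely up to constants.

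\textbf{Step 2: conclude.} Equation \eqref{eq:orbit} then gives \eqref{eq:overlap-signed}. Equation \eqref{eq:absolute} with $a_0=\one_A$ gives \eqref{eq:overlap}. For a singleton orbit, both sides of \eqref{eq:overlap-signed} vanish, because then $\cL_A\equiv0$.

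\textbf{Main check.} The only point needing care is the clipping identity. The min inside $\chi$ must act on each coordinate separately, and the identity holds because every nonzero difference equals $1$ exactly. The clip at $u=1$ is therefore active precisely when $t<1$. We should also confirm that the integral converges near $s=0$. This follows from $\cL_A(s)\le s\,\E\kappa$ by Jensen's inequality, which makes the integrand $O(s^{-1/2})$.
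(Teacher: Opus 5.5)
Your proposal is correct and follows the paper's proof: you compute $D_t(\one_A,\one_B)=\kappa(A,B)\chi(1/t)$, split the integral at $t=1$, and substitute $s=t^{-2}$ to get $\cH(\one_{\cA})=H_A$ exactly, then invoke Theorem~\ref{prop:orbit} with $\norm{\one_A}_2=\sqrt m$. The side checks you add are all valid points the paper leaves implicit: the absence of multiplicities, convergence near $s=0$ via Jensen, and the singleton case.
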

\begin{proof}
Every nonzero coordinate difference between two indicator vectors has absolute value~$1$. Therefore
\[
 D_t(\one_A,\one_B)=\kappa(A,B)\chi(1/t),\qquad
 -\log Z_{\one_{\cA}}(t)=\cL_A(\chi(1/t)).
\]
On $0<t\le1$, the last expression is $\cL_A(1)$. On $t\ge1$, substitute $s=t^{-2}$ to obtain the displayed formula for $H_A=\cH(\one_{\cA})$. Apply Theorem~\ref{prop:orbit}, noting that $\norm{\one_A}_2=\sqrt m$.
\end{proof}

For translates in a finite group, the average in \eqref{eq:overlap-profile} can equivalently be taken over group elements, including multiplicities. Thus the distribution of overlaps determines the order of the expected maximum even when the random sums are strongly dependent.

\subsection{A scalar calculation}

\begin{lemma}\label{lem:scalar}
For $M\ge2$ and $v\ge0$, set
$q_M(v)=-\log[M^{-1}+(1-M^{-1})e^{-v}]$. Then
\begin{equation}\label{eq:scalar}
 \tfrac13\min\{v,\log M\}\le q_M(v)\le\min\{v,\log M\}.
\end{equation}
Moreover, for $a,L>0$,
\begin{equation}\label{eq:scalar-integral}
 \int_0^\infty\min\{a\chi(1/t),L\}\,dt
 =2\min\{a,\sqrt{aL}\}.
\end{equation}
\end{lemma}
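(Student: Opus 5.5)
Both assertions are elementary, and I would prove them separately.

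\emph{The scalar bound \eqref{eq:scalar}.} I would write $p=M^{-1}$, so that $q_M(v)=-\log[p+(1-p)e^{-v}]$.

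For the upper bound, the function $q_M$ is nondecreasing in $v$. Since $p+(1-p)e^{-v}\ge p$, we get $q_M(v)\le\log M$. Convexity of $-\log$ (Jensen applied to the two-point average of $1$ and $e^{-v}$) gives
\[
-\log[p\cdot1+(1-p)e^{-v}]\le p\cdot0+(1-p)v\le v .
\]

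For the lower bound I would split into two cases.
\begin{itemize}
\item If $v\ge\log M$, then $e^{-v}\le p$, so $p+(1-p)e^{-v}\le 2p$. Hence $q_M(v)\ge\log M-\log2$. When $M\ge 8$ this is at least $\tfrac23\log M\ge\tfrac13\log M$. For $2\le M<8$ it is simpler to use monotonicity: $q_M(v)\ge q_M(\log M)=-\log(2p-p^2)$, and I would check that this is $\ge\tfrac13\log M$. For example, at $M=2$ it equals $\log(4/3)\approx0.288\ge 0.231$. The ratio is smallest near $M=2$, and I expect this small-$M$ regime to be the only place where the constant $\tfrac13$ needs real care.
\item If $v\le\log M$, I would use concavity. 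The function $q_M$ is concave in $v$, since it is minus the log of a sum of exponentials in $-v$, i.e.\ minus a log-Laplace transform. Also $q_M(0)=0$. Concavity then gives $q_M(v)\ge \frac{v}{\log M}\,q_M(\log M)$, and combining this with the previous case, $q_M(\log M)\ge\tfrac13\log M$, yields $q_M(v)\ge\tfrac13 v$.
\end{itemize}
So the main checkpoint is the single inequality $-\log(2/M-1/M^2)\ge\tfrac13\log M$ for all $M\ge2$. To verify it, I would rewrite it as $M^{1/3}(2/M-1/M^2)\le1$ and maximize the left side in $M$.

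\emph{The integral identity \eqref{eq:scalar-integral}.} Here the computation is direct.
\begin{itemize}
\item For $t\le1$, $\chi(1/t)=1$, so the integrand is the constant $\min\{a,L\}$.
\item For $t\ge1$, $\chi(1/t)=t^{-2}$. The integrand equals $L$ when $t\le\sqrt{a/L}$ and $a/t^2$ afterwards.
\end{itemize}
If $a\le L$, the crossover satisfies $\sqrt{a/L}\le1$, and the integral is
\[
a+\int_1^\infty at^{-2}\,dt=2a .
\]
If $a>L$, the integral is
\[
L\cdot1+L(\sqrt{a/L}-1)+\int_{\sqrt{a/L}}^\infty at^{-2}\,dt=\sqrt{aL}+\sqrt{aL}=2\sqrt{aL}.
\]
Since $\min\{a,\sqrt{aL}\}$ equals $a$ exactly when $a\le L$, both cases combine to $2\min\{a,\sqrt{aL}\}$.
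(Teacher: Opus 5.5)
Your proof is correct. The integral identity \eqref{eq:scalar-integral} is computed exactly as in the paper, with the same split at $t=1$ and $t=\sqrt{a/L}$.

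For \eqref{eq:scalar} your route differs in two places.
\begin{itemize}
\item For $q_M(v)\le v$, the paper simply keeps the lower bound $e^{-v}\le M^{-1}+(1-M^{-1})e^{-v}$ inside the logarithm. Your Jensen step gives the slightly sharper $(1-M^{-1})v$; either works.
\item For the lower bound, the paper differentiates, $q_M'(v)=\frac{(M-1)e^{-v}}{1+(M-1)e^{-v}}$. On $[0,\log M]$ this is at least its value at $v=\log M$, namely $\frac{M-1}{2M-1}$. The inequality $\frac{M-1}{2M-1}\ge\frac13$ is a rational inequality equivalent to $M\ge2$. Integrating, and then using monotonicity beyond $\log M$, finishes the proof.
\end{itemize}

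You instead use concavity through the chord from $0$ to $\log M$. This shifts all the work to the transcendental endpoint inequality $-\log(2/M-1/M^2)\ge\frac13\log M$. That inequality is true, and your plan to verify it goes through. Put $h(M)=2M^{-2/3}-M^{-5/3}$, which is your $M^{1/3}(2/M-1/M^2)$. Then $h'(M)=M^{-8/3}(\frac53-\frac43M)<0$ for $M>5/4$. So on $[2,\infty)$, $h$ is maximized at $M=2$, where $h(2)=3\cdot2^{-5/3}<1$.

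Once this is checked for all $M\ge2$, your separate treatment of $M\ge8$ via $\log M-\log2$ is redundant. Concavity also makes the chord slope $q_M(\log M)/\log M$ at least the terminal tangent slope $q_M'(\log M)$. So the paper's one-line derivative bound would have settled your checkpoint without any maximization. The chord argument gives a slightly better constant for each fixed $M$, while the derivative argument makes the universal constant $\frac13$ immediate.
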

\begin{proof}
The upper bounds in \eqref{eq:scalar} follow by retaining, respectively, $e^{-v}$ and $M^{-1}$ as lower bounds on the expression inside the logarithm. Also
\[
 q_M'(v)=\frac{(M-1)e^{-v}}{1+(M-1)e^{-v}}\ge\frac13
 \qquad(0\le v\le\log M).
\]
Integrate up to $\min\{v,\log M\}$ and use monotonicity thereafter. For \eqref{eq:scalar-integral}, if $L\ge a$ the integral is $a\int_0^\infty\chi(1/t)\,dt=2a$. If $L<a$, split at $t=\sqrt{a/L}\ge1$ to obtain $2\sqrt{aL}$.
\end{proof}

\subsection{Separated orbits and independent blocks}

\begin{corollary}[Uniformly separated set orbits]\label{cor:separated}
Suppose $m\ge1$, $M=\abs{\cA}\ge2$, and $0<\delta\le1$. If
\begin{equation}\label{eq:sep}
 \abs{B\cap C}\le(1-\delta)m
 \qquad(B,C\in\cA,\ B\ne C),
\end{equation}
then
\begin{equation}\label{eq:sep-bound}
 \E\max_{B\in\cA}\left|\sum_{i\in B}\eps_i\right|
 \as_\delta\min\{m,\sqrt{m\log M}\}.
\end{equation}
The same estimate holds without absolute values.
\end{corollary}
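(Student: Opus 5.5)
We will read \eqref{eq:sep-bound} off Corollary~\ref{cor:overlap}, which reduces the problem to estimating $H_A$ from the overlap profile $\cL_A$. Under \eqref{eq:sep}, every $B\ne A$ in $\cA$ satisfies
\[
 2\delta m\le\kappa(A,B)\le 2m ,
\]
while $\kappa(A,A)=0$. Since $e^{-s\kappa}$ is decreasing in $\kappa$, the average defining $\cL_A(s)$ is squeezed between the two-point averages obtained by replacing every off-diagonal $\kappa$ by $2m$ and by $2\delta m$. In the notation of Lemma~\ref{lem:scalar},
\[
 q_M(2\delta ms)\le\cL_A(s)\le q_M(2ms).
\]
By \eqref{eq:scalar}, $\cL_A(s)$ is therefore comparable, up to constants depending on $\delta$, to $\min\{ms,\log M\}$. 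We use $\min\{cv,L\}\ge c\min\{v,L\}$ for $c\le1$ and $\min\{2v,L\}\le2\min\{v,L\}$.

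Rather than handling the two pieces of $H_A$ separately, it is cleaner to return to the $t$-form. From the proof of Corollary~\ref{cor:overlap},
\[
 H_A=\int_0^\infty\cL_A(\chi(1/t))\,dt .
\]
Substituting the two-sided comparison gives
\[
 H_A\as_\delta\int_0^\infty\min\{m\chi(1/t),\log M\}\,dt .
\]
The scalar identity \eqref{eq:scalar-integral}, with $a=m$ and $L=\log M>0$ (positive because $M\ge2$), evaluates this integral as $2\min\{m,\sqrt{m\log M}\}$. Thus
\[
 H_A\as_\delta\min\{m,\sqrt{m\log M}\},
\]
and \eqref{eq:overlap-signed} gives the signed statement.

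For the absolute statement, \eqref{eq:overlap} adds a term $\sqrt m$. This term is absorbed because $m\ge1$ and $\log M\ge\log2$ give
\[
 \sqrt m\le\min\{m,\sqrt{m\log M}\}/\sqrt{\log2}.
\]
So $\sqrt m+H_A$ is of the same order, which proves \eqref{eq:sep-bound}.

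The only delicate point is the comparison of $\cL_A$ with the scalar function $q_M$. It needs monotonicity of $\kappa\mapsto-\log(\cdots)$ and uses $M=\abs{\cA}$ as the weight of the diagonal term; no further counting is needed because the diagonal term appears in $\cL_A$ exactly once. Everything else is the bookkeeping of constants depending on $\delta$.
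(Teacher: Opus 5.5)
Your proposal is correct and follows the paper's own proof essentially step for step. Like the paper, you sandwich $\cL_A(s)$ between the two-point profiles $q_M(2\delta ms)$ and $q_M(2ms)$ using $2\delta m\le\kappa\le2m$, apply Lemma~\ref{lem:scalar} to get $\cL_A(s)\as_\delta\min\{ms,\log M\}$, integrate with \eqref{eq:scalar-integral} after substituting $s=\chi(1/t)$, and absorb the $\sqrt m$ baseline using $m\ge1$ and $\log M\ge\log2$.
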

\begin{proof}
For distinct sets, $2\delta m\le\kappa(B,C)\le2m$. Hence
\[
 M^{-1}+(1-M^{-1})e^{-2ms}
 \le e^{-\cL_A(s)}
 \le M^{-1}+(1-M^{-1})e^{-2\delta ms}.
\]
Lemma~\ref{lem:scalar} gives $\cL_A(s)\as_\delta\min\{ms,\log M\}$. After substitution of $s=\chi(1/t)$, \eqref{eq:scalar-integral} proves the signed estimate. The additional $\sqrt m$ in \eqref{eq:overlap} is absorbed: $m\ge1$ and $\log M\ge\log2$ imply
$\min\{m,\sqrt{m\log M}\}\ge c\sqrt m$.
\end{proof}

If $A$ is a subgroup of size $m$ and index $M$, its distinct translates are disjoint. Their partition function is exactly
$M^{-1}+(1-M^{-1})e^{-2ms}$. More generally, for $M$ independent sums $R_1,\ldots,R_M$, each of $m$ independent signs,
\begin{equation}\label{eq:blocks}
 \E\max_{1\le j\le M}\abs{R_j}
 \as\min\{m,\sqrt{m\log(eM)}\},\qquad m,M\ge1.
\end{equation}
For $M\ge2$, view the blocks as an orbit under permutations of whole blocks and apply Corollary~\ref{cor:separated}. For $M=1$, use \eqref{eq:khintchine}. Without absolute values, the same order holds for $M\ge2$, but the expectation is zero for $M=1$.

\begin{example}[Why clipping is necessary]\label{ex:singleton}
Take $m=1$ and $M=\abs I\ge2$. Then
\[
 \E\max_{i\in I}\abs{\eps_i}=1,\qquad
 \E\max_{i\in I}\eps_i=1-2^{1-M}.
\]
In contrast, a purely quadratic cost would give an integral of order $\sqrt{\log M}$. The cap in $\chi$ records the saturation of a sign at size~$1$.
\end{example}

\subsection{A two-level overlap profile}

Suppose $M\ge2$ and every two distinct members have the same intersection size $\ell<m$. Write $v=m-\ell$. Then
\[
 e^{-\cL_A(s)}=M^{-1}+(1-M^{-1})e^{-2vs}.
\]
The preceding calculation, without assuming $v\as m$, yields
\begin{align}
 \E\max_{B\in\cA}\sum_{i\in B}\eps_i
   &\as\min\{v,\sqrt{v\log M}\},\label{eq:two-level-signed}\\
 \E\max_{B\in\cA}\left|\sum_{i\in B}\eps_i\right|
   &\as\sqrt m+\min\{v,\sqrt{v\log M}\}.\label{eq:two-level}
\end{align}
This includes any difference-set example with the stated constant-overlap property. The distinction between $m$ and $v$ is essential: a large common overlap can leave the size of each random sum almost unchanged while making the variation across the family much smaller.

\section{Moving intervals and rectangular windows}\label{sec:windows}

\subsection{An integration estimate}

The upper bounds for geometric windows will use one elementary calculation.
\begin{lemma}\label{lem:integration}
For $m>0$, $L\ge1$, and $d\ge1$,
\begin{equation}\label{eq:integration}
 \int_0^\infty\min\!\left\{\frac m{t^2},\,
 L+d\log\!\left(1+\frac m{t^2}\right)\right\}\,dt
 \le C_d\sqrt{mL}.
\end{equation}
\end{lemma}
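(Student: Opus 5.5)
The plan is to split the integral at the scale $t_0=\sqrt{m/L}$, where $m/t^2=L$. Each piece is then handled with the branch of the minimum that is cheaper there.

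On the large-scale range $t\ge t_0$, I use the first branch $m/t^2$. This gives
\[
 \int_{t_0}^\infty\frac m{t^2}\,dt=\frac m{t_0}=\sqrt{mL}.
\]

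On the small-scale range $0<t\le t_0$, I use the second branch $L+d\log(1+m/t^2)$. The constant part contributes $Lt_0=\sqrt{mL}$. For the logarithmic part, I substitute $t=t_0u$, so that $m/t^2=L/u^2$ and
\[
 \int_0^{t_0}\log\!\left(1+\frac m{t^2}\right)dt
 =t_0\int_0^1\log\!\left(1+\frac L{u^2}\right)du .
\]
Since $L\ge1$, I bound $1+L/u^2\le 2L/u^2$ for $u\le1$. This gives
\[
 \int_0^1\log\!\left(1+\frac L{u^2}\right)du\le\log(2L)+2\int_0^1\log(1/u)\,du=\log(2L)+2 .
\]
The logarithmic part is therefore at most $d\,t_0(\log(2L)+2)=d\sqrt{m/L}\,(\log 2L+2)$.

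The only step needing care is showing that this last quantity is $\lesssim_d\sqrt{mL}$, i.e.\ that $\log(2L)+2\le C L$ for $L\ge1$. This is immediate, since $\log(2L)+2\le L+2\le 3L$. The hypothesis $L\ge1$ is used exactly here: it absorbs the logarithm and the additive constants, and without it the bound would fail as $L\to0$.

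Summing the three contributions gives
\[
 \sqrt{mL}+\sqrt{mL}+3d\sqrt{mL},
\]
so one can take $C_d=2+3d$. I expect no real obstacle. The only point to check is that the logarithmic term is integrable near $t=0$, which the substitution shows explicitly.
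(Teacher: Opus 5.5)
Your proof is correct and is essentially the paper's argument: you use the same split at $\sqrt{m/L}$, the same substitution $t=\sqrt{m/L}\,u$, and the same use of $\int_0^1\log(1/u)\,du=1$. The only differences are cosmetic: you bound $1+L/u^2\le 2L/u^2$ where the paper uses $(1+L)/u^2$, and you make the constant explicit as $C_d=2+3d$.
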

\begin{proof}
Let $a=\sqrt{m/L}$. On $(a,\infty)$, integrate $m/t^2$ to get $m/a=\sqrt{mL}$. On $(0,a)$, write $t=au$. For $0<u\le1$,
\[
 \log(1+L/u^2)\le\log(1+L)+2\log(1/u).
\]
Since $\int_0^1\log(1/u)\,du=1$, the contribution is at most
$a[L+d\log(1+L)+2d]\le C_d aL$. This proves \eqref{eq:integration}.
\end{proof}

\subsection{Moving intervals on a cycle}

Let $N\ge1$, $1\le m\le N$, and define, with indices understood modulo $N$,
\[
 W_{k,m}=\sum_{j=0}^{m-1}\eps_{k+j},\qquad 0\le k<N.
\]
Each window has exactly $m$ coordinates.

\begin{theorem}[Cyclic moving windows]\label{thm:interval}
Uniformly in $N$ and $m$,
\begin{equation}\label{eq:interval}
 \E\max_{0\le k<N}\abs{W_{k,m}}
 \as\min\!\left\{m,\sqrt{m\log\frac{eN}{m}}\right\}.
\end{equation}
For the signed maximum, put $u=\min\{m,N-m\}$. If $u\ge1$, then
\begin{equation}\label{eq:interval-signed}
 \E\max_{0\le k<N}W_{k,m}
 \as\min\!\left\{u,\sqrt{u\log\frac{eN}{u}}\right\}.
\end{equation}
If $m=N$, the signed expectation is zero.
\end{theorem}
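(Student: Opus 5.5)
We would apply Corollary~\ref{cor:overlap} to the orbit of the interval $A=\{0,\dots,m-1\}$ under rotations of $\Z_N$, averaging over the group $\Z_N$ with multiplicities as permitted after \eqref{eq:overlap-profile}. The overlap profile is explicit. For a shift $k$ with cyclic distance $\abs{k}_N=\min\{k,N-k\}$, the symmetric difference is
\[
 \kappa_k=2\min\{\abs{k}_N,u\},\qquad u=\min\{m,N-m\}.
\]
This holds because the complement of a window of length $m$ is a window of length $N-m$, and $\one_A\mapsto\one_{A^c}$ preserves $\kappa$. Hence
\[
 e^{-\cL_A(s)}=\frac1N\sum_{k\in\Z_N}e^{-2s\min\{\abs k_N,u\}},
\]
and the signed quantity depends only on $(N,u)$. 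That explains the appearance of $u$ in \eqref{eq:interval-signed}. If $m=N$ the orbit is a singleton, so the signed expectation vanishes.

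\textbf{Two-sided estimate of the profile.} Since about $\min\{N, 2/s\}$ shifts have $\abs k_N\le 1/s$, we have
\[
 e^{-\cL_A(s)}\as \frac{\min\{N,\,1+1/s\}}{N}+e^{-2su}\,\frac{N-2u}{N}.
\]
From this we aim to show
\[
 \cL_A(s)\as\min\Bigl\{su,\ \log\frac{eN}{u}+\log(1+su)\Bigr\}\wedge\log N
\]
in the relevant ranges. Roughly, for $su\lesssim 1$ the profile is linear, $\cL_A(s)\as su$; this follows by expanding, since the average of $\min\{\abs k_N,u\}$ is comparable to $u$ when $u\le N/2$. For larger $su$, the first summand gives $\cL_A(s)\le\log(Ns/(1+s))$, while the second gives $\cL_A(s)\le 2su+\log(N/(N-2u))$. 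The edge case $u$ close to $N/2$ needs care; there the tail term is absent but the first term already gives $\log N\as\log(eN/u)$.

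\textbf{Upper bound.} We insert this into $H_A=\int_0^\infty\cL_A(\chi(1/t))\,dt$. With $s=\chi(1/t)\le 1$, the bound
\[
 \cL_A(s)\le\min\Bigl\{\frac{2u}{t^2},\ \log\frac{eN}{u}+\log\Bigl(1+\frac{u}{t^2}\Bigr)\Bigr\}
\]
is exactly the form of Lemma~\ref{lem:integration} with $d=1$ and $L=\log(eN/u)$. This gives $H_A\lesssim\sqrt{u\log(eN/u)}$. Separately, $\cL_A\le 2us$ and \eqref{eq:scalar-integral} give $H_A\le 4u$.

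\textbf{Lower bound.} We restrict attention to shifts $k\in u\Z$, choosing $\lfloor N/u\rfloor$ translates of $A$ at mutual cyclic distance at least $u$. Equivalently, we lower bound $\cL_A$ directly: at most a fraction $C/(sN)+1/N\lesssim u/N$ of the mass can have $\kappa<u$ when $su\ge1$. Hence $e^{-\cL_A(s)}\lesssim u/N+e^{-su}$, and Lemma~\ref{lem:scalar} with $M\as N/u$ gives $\cL_A(s)\gtrsim\min\{su,\log(eN/u)\}$ whenever $N/u\ge 2$. When $N/u<2$, so that $u\as N$, the linear regime alone gives $\cL_A(1)\gtrsim\min\{u,1\}$ and a lower bound of order $u\wedge\sqrt{u}$, matching $\min\{u,\sqrt{u\log(eN/u)}\}$ up to constants. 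Then \eqref{eq:scalar-integral} yields \eqref{eq:interval-signed}.

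\textbf{Absolute version.} By \eqref{eq:overlap}, the absolute maximum is $\as\sqrt m+H_A$. We must check that this equals $\min\{m,\sqrt{m\log(eN/m)}\}$:
\begin{itemize}
\item If $m\le N/2$, then $u=m$ and $\sqrt m$ is absorbed, since $\min\{m,\sqrt{m\log(eN/m)}\}\ge\sqrt m$ when $m\ge1$.
\item If $m>N/2$, then $\log(eN/m)\as1$, so the target is $\as\sqrt m$. Here $H_A\lesssim\sqrt{u\log(eN/u)}\lesssim\sqrt{N}\as\sqrt m$, using that $x\log(eN/x)$ is increasing on $x\le N$.
\end{itemize}

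\textbf{Main obstacle.} The main difficulty is the uniform two-sided control of $\cL_A$ near the crossover $su\approx\log(N/u)$ and for $u$ comparable to $N$. We would handle it by comparing directly with the separated subfamily in the lower bound and with Lemma~\ref{lem:integration} in the upper bound, which avoids needing sharp asymptotics of the profile.
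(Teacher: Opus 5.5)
Your proposal is correct, but it is organized differently from the paper's proof.

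\textbf{How the paper proceeds.} It applies the overlap formula only for $m\le N/2$. Its upper bound is the same partition-function estimate you use: keep the shifts $0,\dots,m-1$ together with $e^{-\cL_A(s)}\ge e^{-2ms}$, then apply Lemma~\ref{lem:integration}. Its lower bound instead passes to the subfamily of $\lfloor N/m\rfloor$ disjoint windows and quotes the independent-block estimate \eqref{eq:blocks}. It handles $m>N/2$ through the pathwise identity $W_{k,m}=T_N-W_{k+m,N-m}$. This yields the exact equality $\E\max_kW_{k,m}=\E\max_kW_{k,u}$, and together with $\E\abs{T_N}\le\sqrt N$ it gives the absolute bound.

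\textbf{How you proceed.} You perform the complementation at the level of the overlap profile, observing $\kappa_k=2\min\{\abs{k}_N,u\}$ for every $m$. One computation of $\cL_A$ therefore covers all $1\le m\le N-1$. You obtain the lower bound from a lower estimate of the full profile via Lemma~\ref{lem:scalar}, rather than from a subfamily.

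\textbf{Comparison.} The paper's route gives an exact identity between lengths $m$ and $N-m$, and it reduces the lower bound to the already-settled independent case. Yours gives a uniform treatment with no separate large-window step. The price is that you compare $m$ and $N-m$ only up to constants, through $H_A$.

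\textbf{A bookkeeping point to tighten.} An unspecified constant in $e^{-\cL_A(s)}\lesssim u/N+e^{-su}$ would prevent applying Lemma~\ref{lem:scalar}, which needs $M\ge2$, when $N/u$ is bounded. Your fallback case $N/u<2$ does not rescue this, because $u\le N/2$ always, so that case never occurs. Count exactly instead:
\begin{itemize}
\item $\kappa_k<u$ forces $\abs{k}_N<u/2$, and at most $u$ shifts satisfy this.
\item Hence $e^{-\cL_A(s)}\le p+(1-p)e^{-su}$ with $p\le u/N\le\frac12$.
\item Lemma~\ref{lem:scalar} with $M=N/u\ge2$ then gives $\cL_A(s)\ge\frac13\min\{su,\log(N/u)\}$ for all $s\ge0$.
\end{itemize}
Then \eqref{eq:scalar-integral} and $\log(N/u)\as\log(eN/u)$ finish the lower bound without any case split.
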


\begin{proof}
We first treat $m\le N/2$ by overlap geometry, and then pass to larger windows by complementation.

\step{The exact overlap profile}
For $A=\{0,\ldots,m-1\}\subset\Z/N\Z$,
\[
 \kappa(k)=\abs{A\mathbin\triangle(k+A)}
 =2\min\{k,N-k,m\},\qquad 0\le k<N.
\]
Consequently
\begin{equation}\label{eq:interval-partition}
 e^{-\cL_A(s)}=\frac1N\left[
 1+2\sum_{k=1}^{m-1}e^{-2ks}+(N-2m+1)e^{-2ms}\right].
\end{equation}
The coefficient of $e^{-2ms}$ counts the shifts with disjoint windows.

\step{The upper bound}
Since $\kappa(k)\le2m$, we have $e^{-\cL_A(s)}\ge e^{-2ms}$. Also, for $s>0$,
\[
 \sum_{k=0}^{m-1}e^{-2ks}
 \ge\int_0^m e^{-2sx}\,dx
 =\frac{1-e^{-2ms}}{2s}
 \ge\frac{cm}{1+ms}.
\]
The same bound holds at $s=0$ by continuity. Retaining these terms in \eqref{eq:interval-partition} gives
\begin{equation}\label{eq:interval-lower-Z}
 e^{-\cL_A(s)}\ge\frac{cm}{N(1+ms)}.
\end{equation}
\Needspace{7\baselineskip}
Let $L=\log(eN/m)$. The two lower bounds on the partition function give
\begin{align*}
 \cL_A(\chi(1/t))&\le2m\chi(1/t),\\*
 \cL_A(\chi(1/t))&\le
 C\min\!\left\{\frac m{t^2},\,
 L+\log\!\left(1+\frac m{t^2}\right)\right\}.
\end{align*}
Integrating the first estimate gives $H_A\le4m$; integrating the second and applying Lemma~\ref{lem:integration} gives $H_A\lesssim\sqrt{mL}$. The baseline $\sqrt m$ is absorbed by $\min\{m,\sqrt{mL}\}$. Corollary~\ref{cor:overlap} therefore proves the upper bound in \eqref{eq:interval}.

\step{The lower bound}
The windows starting at $0,m,2m,\ldots,(M_0-1)m$, where $M_0=\lfloor N/m\rfloor\ge2$, are disjoint. Their sums are independent, so \eqref{eq:blocks} gives the required lower bound. Indeed,
$\log M_0\as\log(eN/m)$ for $N/m\ge2$. The signed independent-block estimate gives the same lower bound without absolute values. This proves both conclusions when $m\le N/2$.

\step{Large windows and the endpoint}
Let $m>N/2$ and $u=N-m$. Write $T_N=\sum_{x=0}^{N-1}\eps_x$. Then
\begin{equation}\label{eq:complement}
 W_{k,m}=T_N-W_{k+m,u},
\end{equation}
where the empty-window sum is defined to be zero. A single length-$m$ sum gives the absolute lower bound $c\sqrt m\ge c'\sqrt N$. The upper bound follows from \eqref{eq:complement}, $\E\abs{T_N}\le\sqrt N$, and the already proved estimate for $u\le N/2$, because
\[
 u\log(eN/u)\le N\qquad(0<u\le N).
\]
Thus the absolute maximum is comparable to $\sqrt N$. For $m>N/2$, the right-hand side of \eqref{eq:interval} is also comparable to $\sqrt N$.

For the signed maximum, take the maximum in \eqref{eq:complement} and then expectations. Since $\E T_N=0$ and the vector of complementary-window sums is symmetric,
\begin{equation}\label{eq:signed-complement}
 \E\max_k W_{k,m}=\E\max_k W_{k,u}.
\end{equation}
No independence between $T_N$ and the complementary windows is used. This proves \eqref{eq:interval-signed}. At $m=N$, all windows equal $T_N$, so the signed expectation is zero.
\end{proof}

For the range treated in \cite[Theorem~2.1]{stationary}, this is the overlap-profile form of the moving-interval estimate, with the convention of exactly $m$ summands. The signed and absolute extensions cannot be identified for all $m$. At $m=N$, their orders are respectively $0$ and $\sqrt N$; at $m=N-1$, the signed expectation is of constant order while the absolute maximum remains of order $\sqrt N$.

\subsection{Rectangular scan windows}

Let
\[
 G=\prod_{j=1}^d\Z/N_j\Z,\qquad
 A=\prod_{j=1}^d\{0,\ldots,m_j-1\},
 \qquad 1\le m_j\le N_j/2.
\]
Write $m=\prod_jm_j$ and $Q=\prod_j(N_j/m_j)$.

\begin{theorem}\label{thm:boxes}
With constants depending only on $d$,
\begin{equation}\label{eq:boxes}
 \E\max_{g\in G}\left|\sum_{x\in g+A}\eps_x\right|
 \as_d\min\{m,\sqrt{m\log(eQ)}\}.
\end{equation}
The same estimate holds for the signed maximum.
\end{theorem}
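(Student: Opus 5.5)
The proof will follow the proof of Theorem~\ref{thm:interval}: an upper bound from Corollary~\ref{cor:overlap} via a lower bound on the partition function, and a lower bound from a family of disjoint translates together with \eqref{eq:blocks}.

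\emph{Overlap profile.} For $g=(k_1,\dots,k_d)$, write $\ell_j(k_j)=\abs{\{0,\dots,m_j-1\}\cap(k_j+\{0,\dots,m_j-1\})}=\max\{m_j-\min(k_j,N_j-k_j),0\}$. Then $\abs{A\cap(g+A)}=\prod_j\ell_j$, and so
\[
 \kappa(g)=2\Bigl(m-\prod_j\ell_j\Bigr).
\]
For $\abs{k_j}\le m_j$ (cyclic distance) we have $\ell_j=m_j-\abs{k_j}$. Writing $\theta_j=\abs{k_j}/m_j$, this gives $m-\prod_j\ell_j=m\bigl(1-\prod_j(1-\theta_j)\bigr)\le m\sum_j\theta_j=\sum_j \abs{k_j}\,m/m_j$.

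\emph{Upper bound.} As in \eqref{eq:interval-lower-Z}, I would keep only two kinds of terms in $e^{-\cL_A(s)}$. The first is the trivial bound $e^{-2ms}$, which yields $H_A\le 4m$. The second is the sum over shifts with $0\le k_j<m_j$:
\[
 e^{-\cL_A(s)}\ge\frac1{\abs G}\sum_{0\le k_j<m_j}\exp\Bigl(-2s\sum_j k_jm/m_j\Bigr)
 =\frac1{\abs G}\prod_j\sum_{k=0}^{m_j-1}e^{-2skm/m_j}.
\]
Each factor is at least $c\,m_j/(1+ms)$, by the same integral comparison used for intervals (with $s$ replaced by $sm/m_j$). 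This gives
\[
 e^{-\cL_A(s)}\ge c^d\,\frac{m}{\abs G\,(1+ms)^d}=\frac{c^d}{Q(1+ms)^d}.
\]
Hence
\[
 \cL_A(\chi(1/t))\le C_d\min\Bigl\{m/t^2,\ \log(eQ)+d\log(1+m/t^2)\Bigr\}
\]
for $t\ge1$, using also $\cL_A(s)\le 2ms$. For $t\le1$ the value is $\cL_A(1)\le C_d(\log(eQ)+d\log(1+m))$, which fits the same pattern. Lemma~\ref{lem:integration} with $L=\log(eQ)\ge1$ then gives $H_A\lesssim_d\sqrt{m\log(eQ)}$. Combined with $H_A\le4m$, and absorbing the baseline $\sqrt m$ (since $\min\{m,\sqrt{mL}\}\ge\sqrt m$), Corollary~\ref{cor:overlap} yields the upper bound in both the signed and absolute forms.

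\emph{Lower bound.} I would take the translates $g+A$ with $g_j\in m_j\{0,\dots,M_j-1\}$, where $M_j=\lfloor N_j/m_j\rfloor\ge2$. These are pairwise disjoint, and there are $M=\prod_jM_j\ge 2^d$ of them. Since $M_j\ge N_j/(2m_j)$, we get $\log M\ge\log Q-d\log2$, and also $\log M\ge d\log 2$; therefore $\log M\gtrsim_d\log(eQ)$. Applying \eqref{eq:blocks} in its signed version ($M\ge2$) gives the lower bound $\gtrsim_d\min\{m,\sqrt{m\log(eQ)}\}$ for the signed maximum, and hence also for the absolute one.

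\emph{Main obstacle.} The delicate step is the upper bound on the overlap profile, specifically the linearization $1-\prod_j(1-\theta_j)\le\sum_j\theta_j$ and the fact that the resulting product lower bound has polynomial degree $d$ in $(1+ms)$. This is exactly why Lemma~\ref{lem:integration} is stated with a general $d$. The remaining care is to check that the constants and the $t\le1$ range are absorbed consistently.
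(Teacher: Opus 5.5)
Your proposal is correct and follows the paper's proof essentially step for step: you restrict to shifts $0\le k_j<m_j$, linearize via $1-\prod_j(1-\theta_j)\le\sum_j\theta_j$, use the product lower bound $c^d/(Q(1+ms)^d)$ together with $e^{-2ms}$ and Lemma~\ref{lem:integration}, and get the lower bound from the disjoint grid of $\prod_j\lfloor N_j/m_j\rfloor$ translates via \eqref{eq:blocks}. The only differences are cosmetic: you check the range $t\le1$ explicitly, and you use $\lfloor x\rfloor\ge x/2$ instead of $\lfloor x\rfloor\ge\sqrt x$ to obtain $\log M\gtrsim\log(eQ)$.
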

\begin{proof}
For shifts satisfying $0\le k_j<m_j$, the overlap factorizes:
\[
 \abs{A\cap(k+A)}=m\prod_{j=1}^d(1-k_j/m_j).
\]
The inequality $1-\prod_j(1-u_j)\le\sum_ju_j$ for $u_j\in[0,1]$ gives
$\kappa(k)\le2m\sum_j k_j/m_j$. Retaining only these shifts and comparing each geometric sum with an integral yields
\begin{align*}
 e^{-\cL_A(s)}
 &\ge\prod_{j=1}^d\left[\frac1{N_j}
       \sum_{k=0}^{m_j-1}e^{-2msk/m_j}\right]\\
 &\ge\frac{c^d}{Q(1+ms)^d}.
\end{align*}
Also $e^{-\cL_A(s)}\ge e^{-2ms}$. Hence $H_A\lesssim m$, while Lemma~\ref{lem:integration}, with $L=\log(eQ)$ and constants allowed to depend on $d$, gives $H_A\lesssim_d\sqrt{mL}$. The baseline $\sqrt m$ is absorbed as before.

A rectangular grid contains $M_0=\prod_j\lfloor N_j/m_j\rfloor$ disjoint translates. For $x\ge2$, $\sqrt x\le\lfloor x\rfloor\le x$, so
$\sqrt Q\le M_0\le Q$ and $\log M_0\as\log(eQ)$. Applying the signed and absolute independent-block estimates proves both lower bounds.
\end{proof}

\section{Affine subspaces over a finite field}\label{sec:finite}

The interval and box examples use many nearby, strongly overlapping sets. Finite geometry gives a complementary example in which every two distinct sets are uniformly separated.

Let $q$ be a prime power, let $1\le k<d$, and let $\cA$ be the family of all affine $k$-dimensional subspaces of $\Fq^d$. The affine group acts by coordinate permutations on $I=\Fq^d$ and transitively on $\cA$. Every member has $m=q^k$ points. Two distinct members are either disjoint or meet in an affine subspace of dimension at most $k-1$, so
\[
 \abs{B\cap C}\le q^{k-1}\le\tfrac12q^k
 \qquad(B\ne C).
\]
Thus Corollary~\ref{cor:separated} applies with a universal separation constant.

\begin{theorem}\label{thm:affine}
For independent symmetric signs indexed by $\Fq^d$,
\begin{equation}\label{eq:affine}
 \E\max_{V\in\cA}\left|\sum_{x\in V}\eps_x\right|
 \as\min\!\left\{q^k,\,
 \sqrt{q^k(k+1)(d-k)\log q}\right\}.
\end{equation}
The constants are universal, and the same estimate holds without absolute values.
\end{theorem}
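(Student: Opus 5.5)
The proof is an application of Corollary~\ref{cor:separated} with $\delta=1/2$. That corollary yields, with universal constants, both the absolute and the signed maximum in the order $\min\{m,\sqrt{m\log M}\}$, where $m=q^k$ and $M=\abs{\cA}$. Two facts have already been checked in the text: the affine group acts transitively on $\cA$ by coordinate permutations, and distinct members satisfy $\abs{B\cap C}\le q^{k-1}\le\frac12 q^k$. Also $M\ge 2$ because $k<d$. So the only remaining task is to show
\[
 \log M\as (k+1)(d-k)\log q
\]
with universal constants.

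To count $M$, I would write it as the number of $k$-dimensional linear subspaces times the number of cosets of each, which is $q^{d-k}$. The linear subspaces are counted by the Gaussian binomial
\[
 \binom dk_q=\prod_{i=0}^{k-1}\frac{q^{d-i}-1}{q^{k-i}-1}.
\]
Each factor $(q^{d-i}-1)/(q^{k-i}-1)$ lies between $q^{d-k}$ and $q^{d-k}\cdot\frac{q}{q-1}\le 2q^{d-k}$. The lower bound holds because $q^{d-i}-1\ge q^{d-k}(q^{k-i}-1)$. The upper bound follows from $(q^{d-i}-1)/(q^{k-i}-1)\le q^{d-i}/(q^{k-i}-q^{k-i-1})$.

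Multiplying the factors gives
\[
 q^{(k+1)(d-k)}\le M\le 2^k q^{(k+1)(d-k)}.
\]
Taking logarithms, $\log M\ge (k+1)(d-k)\log q$. In the other direction, $\log M\le (k+1)(d-k)\log q+k\log 2$, and since $\log 2\le\log q$ and $d-k\ge 1$, the extra term is at most $(k+1)(d-k)\log q$. Hence $\log M\le 2(k+1)(d-k)\log q$.

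Substituting this into \eqref{eq:sep-bound} gives \eqref{eq:affine}. Here $\min\{m,\sqrt{mX}\}$ changes only by constant factors when $X$ is replaced by a comparable quantity. The signed version follows from the last sentence of Corollary~\ref{cor:separated}.

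The only step needing care is the uniform two-sided bound on the Gaussian binomial. The factor $2^k$ must be absorbed using $q\ge2$, so that no dependence on $k$ or $d$ enters the constants. Everything else is a direct citation of earlier results.
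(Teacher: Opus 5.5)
Your proof is correct and follows the paper's argument: apply Corollary~\ref{cor:separated} with $\delta=1/2$, which reduces everything to showing $\log M\as(k+1)(d-k)\log q$ by writing $M$ as $q^{d-k}$ times the Gaussian binomial. The only difference is cosmetic: you bound each Gaussian-binomial factor by $2q^{d-k}$ and absorb the resulting $k\log 2$ using $q\ge2$ and $d-k\ge1$, whereas the paper bounds the whole correction product uniformly by $\prod_{i\ge1}(1-2^{-i})^{-1}<4$.
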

\begin{proof}
It remains to count the family. Counting ordered independent $k$-tuples in $\Fq^d$ and dividing by the number of ordered bases of a fixed $k$-dimensional space gives
\[
 \genfrac{[}{]}{0pt}{}{d}{k}_q
 =\prod_{j=0}^{k-1}\frac{q^{d-j}-1}{q^{k-j}-1}.
\]
Each linear space has $q^{d-k}$ distinct affine translates. Therefore
\begin{equation}\label{eq:affine-count}
 M=\abs{\cA}=q^{d-k}\genfrac{[}{]}{0pt}{}{d}{k}_q,
 \qquad
 q^{(k+1)(d-k)}\le M\le4q^{(k+1)(d-k)}.
\end{equation}
For completeness, after extracting $q^{k(d-k)}$, the remaining product has factors
$(1-q^{-(d-j)})/(1-q^{-(k-j)})$. Each is at least~$1$, and the whole product is at most
$\prod_{i=1}^\infty(1-2^{-i})^{-1}<4$. One elementary check of the last inequality is
\[
 \prod_{i=1}^\infty(1-2^{-i})
 \ge\frac{21}{64}\left(1-\sum_{i=4}^\infty2^{-i}\right)
 =\frac{147}{512}>\frac14.
\]
Taking logarithms in \eqref{eq:affine-count} gives
$\log M\as(k+1)(d-k)\log q$. Substitute this and $m=q^k$ into Corollary~\ref{cor:separated}.
\end{proof}

The estimate does not require the subspace sums to be independent. Their dependence is substantial, but the uniform overlap bound places the full family within the separated-orbit calculation. The restriction $k<d$ also has a transparent meaning: for $k=d$ there is only one set, so the signed expectation is zero and the absolute expectation is comparable to $q^{d/2}$.

\section{The same overlaps with different coordinate laws}\label{sec:tails}

The overlap profile describes the sets; the coordinate cost describes the random variables. This separation is useful beyond signs. We state the normalization and the additional imported result explicitly.

Let $(Y_i)_{i\in I}$ be independent copies of a nonzero symmetric random variable $Y$. Suppose
\[
 N(u)=-\log\Pp(\abs Y\ge u),\qquad u\ge0,
\]
is closed, convex, and nondecreasing, with $N(0)=0$ and the value $+\infty$ allowed. This is the log-concave-tail assumption. Write
$q(p)=\sup\{u\ge0:N(u)\le p\}$ and first normalize $q(1)=1$. Let $b=\lim_{p\to\infty}q(p)$. Following \cite[Section~1]{invariant}, define the even potential
\[
 V(u)=
 \begin{cases}
 u^2,&\abs u\le1,\\
 2N(\abs u)-1,&1<\abs u\le b,\\
 +\infty,&\abs u>b,
 \end{cases}
\]
when $b>1$. If $b=1$, take $V(u)=u^2$ on $[-1,1]$ and $+\infty$ outside. Its intrinsic coordinate cost is
\begin{equation}\label{eq:intrinsic}
 \Psi(r)=\int_0^b V'_+(s)\one_{\{V'_+(s)<r\}}\,ds,
 \qquad r\ge0.
\end{equation}
The strict inequality fixes the convention at slope thresholds.

Theorem~1.1 of \cite{invariant} is the analogue of \eqref{eq:external} for this source, with $D_t^\Psi(a,b)=\sum_i\Psi(\abs{a_i-b_i}/t)$. We import this general-tail comparison only in the present section. Define $\cL_A(\infty)=\log M$, where $M=\abs{\cA}$; this is the limit of \eqref{eq:overlap-profile}, since only the reference set has zero symmetric difference.

\begin{corollary}\label{cor:tails}
Under the preceding assumptions,
\begin{equation}\label{eq:tails}
 \E\max_{B\in\cA}\sum_{i\in B}Y_i
 \as\int_0^\infty\cL_A\bigl(\Psi(1/t)\bigr)\,dt.
\end{equation}
For the absolute maximum, the right-hand side must be augmented by
$\E\abs{\sum_{i\in A}Y_i}$. If the law is not normalized, apply the formula to $Y/q(1)$ and multiply the resulting estimate by $q(1)$.
\end{corollary}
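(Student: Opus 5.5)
The plan is to repeat the orbit argument of Section~\ref{sec:proof} with the sign source replaced by $Y$, and then to specialize to indicator vectors as in Corollary~\ref{cor:overlap}.

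\emph{Imported comparison.} For a finitely supported $\mu$ on $\R^I$, put
\[
 B_Y(\mu)=\sup\{\E\langle A,Y\rangle:\Law(A)=\mu\}.
\]
Theorem~1.1 of \cite{invariant} gives $B_Y(\mu)\as R^\Psi(\mu)$. Here $R^\Psi$ is defined exactly as in \eqref{eq:RD}, with $D_t$ replaced by $D_t^\Psi$.

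\emph{Uniform index law.} The proof of Lemma~\ref{lem:uniform} carries over unchanged. It uses only that the law of $(Y_i)$ is invariant under coordinate permutations, which holds for independent identically distributed coordinates. Hence $B_Y(\lambda)=\E\max_{a\in\Omega}\langle a,Y\rangle$ for $\lambda$ uniform on the orbit $\Omega=\one_\cA$.

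\emph{Gibbs self-coupling.} The proof of Lemma~\ref{lem:gibbs} uses only two facts: $D^\Psi_t$ is symmetric, and it is invariant under the permutation action. The invariance makes the row sums of the Gibbs kernel independent of the reference point. Both facts hold for $D^\Psi_t$. The only care needed is that $\Psi$ may take the value $+\infty$ (when $b<\infty$). In that case $e^{-D^\Psi_t}$ vanishes on some pairs, but the diagonal pair always contributes, so $Z>0$. The identity \eqref{eq:KL} remains valid with the usual conventions, giving $\RD^\Psi_\lambda(t)=-\log Z^\Psi_\Omega(t)$.

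\emph{Indicator reduction.} Every nonzero coordinate difference between indicator vectors equals $1$. Therefore
\[
 D^\Psi_t(\one_A,\one_B)=\kappa(A,B)\,\Psi(1/t),\qquad -\log Z^\Psi=\cL_A(\Psi(1/t)).
\]
When $\Psi(1/t)=\infty$, the only surviving term is the diagonal one, since $\kappa>0$ off the diagonal. This gives $\log M$, which is exactly the convention $\cL_A(\infty)=\log M$. Integrating in $t$ yields \eqref{eq:tails}.

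\emph{Absolute maximum.} I would follow the proof of Theorem~\ref{prop:orbit}. Anchoring at $A$, the process $\sum_{B}Y_i-\sum_A Y_i$ is symmetric and vanishes at $A$, so its absolute maximum is comparable to the signed one, and the signed one equals the right-hand side. The triangle inequality gives the upper bound after adding $\E\abs{\sum_A Y_i}$. The lower bound is the maximum of the two obvious lower bounds. Unlike in the Rademacher case, I would not replace this term by a norm, which is why the statement keeps the term $\E\abs{\sum_{i\in A}Y_i}$ itself.

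\emph{Normalization.} Apply the result to $Y/q(1)$, which satisfies $q(1)=1$, and use that the left-hand sides are $1$-homogeneous.

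\emph{Main obstacle.} The difficulty is precisely matching the hypotheses and normalization of the imported Theorem~1.1 of \cite{invariant}: the log-concave tail assumption, $q(1)=1$, and the conventions for $\Psi$ at slope thresholds and at $+\infty$. The rest is a mechanical transfer of the orbit argument. I would check that the imported statement allows infinite costs, or otherwise obtain them as monotone limits of truncated costs.
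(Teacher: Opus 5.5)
Your proposal is correct and follows the paper's proof essentially step for step. You symmetrize the maximizing index as in Lemma~\ref{lem:uniform} and rerun the Gibbs self-coupling for $D_t^\Psi$, with $e^{-\infty}=0$ and the diagonal guaranteeing a positive normalizer. You then reduce to $\kappa(A,B)\Psi(1/t)$ for indicators, and obtain the absolute version by anchoring and the triangle inequality while keeping $\E\abs{\sum_{i\in A}Y_i}$.

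One harmless slip: $\Psi$ never takes the value $+\infty$ when $b<\infty$, since then $\Psi(r)\le rb$. It does take that value when $b=\infty$ and $V'_+$ is bounded, for example for the symmetric exponential, where $\Psi(r)=+\infty$ for $r>2$. Your handling of infinite costs is general, so the argument is unaffected.
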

\begin{proof}
The law of the coordinate vector is invariant under permutations, so Lemma~\ref{lem:uniform} applies unchanged. The Gibbs computation also applies to the cost $D_t^\Psi$. When the cost can be infinite, use $e^{-\infty}=0$ and restrict the entropy identity to finite-cost couplings; the diagonal ensures a positive normalizing constant and a feasible coupling. For indicator vectors,
\[
 D_t^\Psi(\one_A,\one_B)=\kappa(A,B)\Psi(1/t),
\]
with $0\cdot\infty=0$. The general-tail comparison therefore gives \eqref{eq:tails}. Symmetry and the same triangle-inequality argument as in Theorem~\ref{prop:orbit} give the absolute version, retaining the single-index expectation rather than replacing it without justification.
\end{proof}

\subsection{Three coordinate laws on a separated orbit}

Suppose \eqref{eq:sep} holds and $M\ge2$. By Lemma~\ref{lem:scalar}, including its limiting value at infinity, the signed supremum is comparable to
\begin{equation}\label{eq:tails-separated}
 \int_0^\infty\min\{m\Psi(1/t),L\}\,dt,
 \qquad L=\log M.
\end{equation}
This gives the following estimates. In the last row, $Y=\eps E$, where $E$ is an independent mean-one exponential, so $Y$ has density $e^{-|y|}/2$.

\begin{center}
\renewcommand{\arraystretch}{1.45}
\setlength{\tabcolsep}{9pt}
\begin{tabular}{@{}p{0.48\textwidth}p{0.40\textwidth}@{}}
\toprule
\textbf{Independent coordinate law} & \textbf{Expected maximum}\\
\midrule
Symmetric signs & $\min\{m,\sqrt{mL}\}$\\
Standard Gaussian & $\sqrt{mL}$\\
Symmetric mean-one exponential & $\sqrt{mL}+L$\\
\bottomrule
\end{tabular}
\end{center}
\noindent The table applies to both signed and absolute maxima, with constants depending only on $\delta$.

For signs, \eqref{eq:intrinsic} gives $\Psi(r)=\chi(r/2)$; rescaling $t$ reduces the calculation to \eqref{eq:scalar-integral}. For a normalized Gaussian law, the Gaussian-tail estimates give $V'_+(s)\as s$, and hence $\Psi(r)\as r^2$ by \eqref{eq:intrinsic}. Undoing the fixed normalization changes only universal constants. Thus the relevant integral is
\[
 \int_0^\infty\min\{m/t^2,L\}\,dt=2\sqrt{mL}.
\]
For the symmetric mean-one exponential,
\[
 \Psi(r)=
 \begin{cases}
 r^2/4,&0\le r\le2,\\
 +\infty,&r>2.
 \end{cases}
\]
Indeed, $V'_+(s)=2s$ on $0<s<1$ and $V'_+(s)=2$ on $s\ge1$. Consequently \eqref{eq:tails-separated} equals
\begin{equation}\label{eq:exponential-integral}
 \frac L2+\int_{1/2}^\infty\min\{m/(4t^2),L\}\,dt
 =\begin{cases}
 \sqrt{mL},&L\le m,\\
 (m+L)/2,&L\ge m.
 \end{cases}
\end{equation}
Both cases are comparable to $\sqrt{mL}+L$.

\Needspace{6\baselineskip}
Finally, the single-index absolute expectation is comparable to $\sqrt m$ for each of the three laws. For signs this is \eqref{eq:khintchine}; for Gaussians it follows from the one-dimensional normal distribution. For symmetric mean-one exponentials, the sum $R$ of $m$ coordinates satisfies
$\E R^2=2m$ and $\E R^4=12m^2+12m\le24m^2$, so the same H\"older argument applies. Since $L\ge\log2$, these baseline terms are absorbed by the entries in the table.

The three rows isolate the role of the coordinate law. The same overlap geometry produces Bernoulli saturation at $m$, Gaussian growth at scale $\sqrt{mL}$, or an additional exponential contribution of size~$L$. The sets have not changed; only the cost inserted into their overlap profile has changed.

\end{document}